\newtheorem{theorem}{Theorem}
\newtheorem{proposition}[theorem]{Proposition}
\newtheorem{lemma}[theorem]{Lemma}
\newtheorem{corollary}[theorem]{Corollary}
\theoremstyle{definition}
\newtheorem{definition}[theorem]{Definition}
\newtheorem{remark}[theorem]{Remark}
\newtheorem{conjecture}[theorem]{Conjecture}
\newtheorem{example}[theorem]{Example}
\numberwithin{theorem}{section}
\newcommand{\PP}{\mathbb{P}}
\newcommand{\QQ}{\mathbb{Q}}
\newcommand{\CC}{\mathbb{C} }
\newcommand{\NN}{\mathbb{N}}
\newcommand\exend{\hfill$\blacksquare$}
\title{\bf Taylor Polynomials of Rational Functions}
\author{Aldo Conca, Simone Naldi, Giorgio Ottaviani and Bernd Sturmfels}
\date{}
\begin{document}

\maketitle
 \begin{abstract}
 \noindent
A Taylor variety consists of all fixed order Taylor polynomials 
of rational functions,
where the number of variables and degrees
of numerators and denominators are~fixed.
In one variable, Taylor varieties are given by
rank constraints on Hankel matrices.
Inversion of the natural parametrization is known as Pad\'e approximation.
We study the dimension and defining ideals of Taylor~varieties.
Taylor hypersurfaces are interesting for projective geometry, since
their Hessians tend to vanish.
In three and more variables, there exist defective Taylor varieties
whose dimension is smaller than the number of parameters.
We explain this with Fr\"oberg's Conjecture in
commutative algebra. \end{abstract}

\section{Introduction}

Given two polynomials $P$ and $Q$ whose constant term is $1$, the
rational function $P/Q$ has a Taylor series expansion with constant term $1$.
Truncating that series at terms of degree $m$, we obtain the $m$th  Taylor polynomial of $P/Q$.
Its coefficients are polynomials in the coefficients of $P$ and $Q$. For example, consider the
fifth Taylor polynomial of two univariate~quadrics:
\begin{equation} \label{eq:intro1} \frac{1 + p_1 x + p_2 x^2}{1 + q_1 x + q_2 x^2}\,\,\, = \,\, \begin{small}
\begin{matrix} 1+
(p_1{-}q_1)x
- (p_1q_1{-}q_1^2{-}p_2{+}q_2) x^2
+ (p_1 q_1^2{-}q_1^3{-}p_1 q_2{-}p_2 q_1{+}2 q_1 q_2)x^3 \\
-\, (p_1q_1^3-q_1^4-2p_1q_1q_2- p_2q_1^2+3q_1^2 q_2+p_2 q_2-q_2^2)x^4  
+(p_1 q_1^4  -q_1^5 \\ -3p_1q_1^2q_2-p_2q_1^3+4q_1^3q_2+p_1q_2^2+2p_2q_1q_2-3q_1q_2^2)x^5
\,+ \, \ldots
\end{matrix} \end{small}
\end{equation}
This rational function has four parameters $p_1,p_2,q_1,q_2$, but
 the quintic has five coefficients:
\begin{equation} \label{eq:intro2}
  1 \,+\, c_1 x \,+\, c_2 x^2 \,+ \,c_3 x^3 \,+\, c_4 x^4 \,+\, c_5 x^5 \,+\, \cdots.
  \end{equation}
  Therefore, if (\ref{eq:intro2}) is a Taylor polynomial
(\ref{eq:intro1}), then its coefficients $c_i$ must satisfy a constraint.

To find this constraint, we equate (\ref{eq:intro2}) with the right hand side of (\ref{eq:intro1}), and we
extract the coefficients of all $x$-monomials. This yields a system of
five polynomial equations. From that system we eliminate the four unknowns $p_1,p_2,q_1,q_2$.
This process leads to the cubic~equation
\begin{equation} \label{eq:intro3}
{\rm det} \begin{bmatrix}
c_5 & c_4 & c_3 \\
c_4 & c_3 & c_2 \\
c_3 & c_2 & c_1  \end{bmatrix} 
\,\, = \,\,
c_1 c_3 c_5-c_1c_4^2-c_2^2 c_5+2c_2c_3 c_4-c_3^3 \quad = \quad 0.
\end{equation}
This is the condition for a quintic to be a Taylor polynomial for the ratio of two quadrics.

For a geometric view, note that (\ref{eq:intro1})  defines a polynomial map from the
$4$-space with coordinates $(p_1,p_2,q_1,q_2)$ into the $5$-space
with coordinates $(c_1,c_2,c_3,c_4,c_5)$. The closure of the image of this map is called
Taylor variety (see \Cref{taylorvariety}). For the example in (\ref{eq:intro1}), the Taylor variety 
is the hypersurface in projective $4$-space $\PP^4$ defined by the equation (\ref{eq:intro3}).
In this article we study such algebraic varieties of truncated rational series  in $n \geq 1$ variables.

The general case concerns a rational function $P/Q$
where $P$ and $Q$ are polynomials in  $x=(x_1, \ldots, x_n)$
that satisfy $P(0) = Q(0) = 1$. We assume that $P$ has degree $\leq d$
and $Q$ has degree $\leq e$.
We consider the Taylor series of this rational function, expanded up to order $m$:
\begin{equation}
\label{eq:Pade}
\qquad \frac{P(x)}{Q(x)}
\quad =
\quad
\sum_{|\gamma| \leq m}  c_{\gamma} \, x^{\gamma} 
\qquad + \quad \hbox{terms of order $\,\geq m+1$}.
\end{equation}
Here $x^\gamma$ denotes the monomial
$x_1^{\gamma_1} \cdots x_n^{\gamma_n}$ and $|\gamma| = \gamma_1+\cdots+\gamma_n$ is the total
degree of $x^\gamma$.

\smallskip

The main point of this paper is the characterization of all  polynomials
$T=\sum c_\gamma x^\gamma$
that admit such an approximation.
The numerator  $P$ and the denominator $Q$  in  \eqref{eq:Pade}
have degrees $d$ and $e$ respectively.
We call (\ref{eq:Pade}) a {\it Padé approximation} of type $(d,e)$.
Such approximations of analytic functions
originated in work of Hermite \cite{H} and Pad\'e \cite{Pade}.
This topic belongs to numerical analysis, where it is studied mostly in the univariate case \cite{B}.
The computation of a single Pad\'e approximant 
is an instance of a block Toeplitz linear algebra problem \cite{brent}.
Pad\'e approximation is also a classical question of computer algebra; see {\it e.g.}~\cite{BL}.
The Pad\'e approximation problem for $n \geq 2$ can be interpreted
as a problem of computing syzygies~\cite{NN1}.

If the number of parameters ({\it i.e.}~the coefficients of $P$ and $Q$) is small
compared to the approximation order $m$, and if $n=1$, then the rational function $P/Q$ is uniquely determined
by its Taylor polynomial $T$. This is noted {\it e.g.} in \cite[Section~1.1]{B}.
For us, this means that the parametrization $(P,Q) \mapsto T$ is an injective map.
This holds for $n=1$ but fails
for $n \geq 3$. We shall see that the fibers of the map $(P,Q) \mapsto T$ can 
be positive-dimensional, {\it i.e.}~the
uniqueness of the
Pad\'e approximation breaks down.
The geometric study in this article thus represents
a foundational contribution to the theory of
 multivariate Pad\'e approximations.

All students of calculus are familiar with Taylor series expansions.
We therefore chose the name Taylor instead of the name Pad\'e
for the geometric object we shall investigate.
\begin{definition}
  \label{taylorvariety}
  The {\em Taylor variety} $\mathcal{T}^n_{d,e,m}$ is defined as the closure in
  $\PP^{{\small \binom{n+m}{n}-1}}$
  of the set of Taylor polynomials of degree $\leq m$ of rational functions \eqref{eq:Pade} of degree
  $(d,e)$ in $n$ variables.
\end{definition}

The projective space $\PP^{{\small \binom{n+m}{n}-1}}$ 
comprises polynomials in $n$ variables of degree $\leq m$, up to scaling.
Polynomials with $c_0 = 1$ form an affine open chart 
$\CC^{\binom{n+m}{n}-1}$.
We here work over~$\CC$, but our theory
extends to all fields. Our computations are done over the rational numbers~$\QQ$.

The Taylor variety $\,\mathcal{T}^n_{d,e,m}$ is irreducible since it arises
from the image of a polynomial map.
A natural first question is: what is its dimension?
Let's start by counting parameters.
The polynomial $P$ has $\binom{n+d}{n}-1$ free coefficients,
and the polynomial $Q$ has $\binom{n+e}{n}-1$ free coefficients.
Since the dimension can never increase under a polynomial map, we conclude
\begin{equation}
\label{eq:expecteddimension}
\begin{matrix}
{\rm dim}\bigl(\mathcal{T}^n_{d,e,m} \bigr) \,\, \leq \,\,
  {\rm min}
\bigl\{\, \binom{d+n}{n}+\binom{e+n}{n}-2\,,\,
\binom{m+n}{n}-1 \, \bigr\} .  \end{matrix} 
\end{equation}
The quantity on the right hand side is the {\em expected dimension} of the Taylor variety 
$\mathcal{T}^n_{d,e,m}$.
In our example (\ref{eq:intro1}), which is the case $n=1,d=e=2,m=5$,
 the expected dimension is $4$. And, indeed,
$\mathcal{T}^1_{2,2,5}$ is the cubic hypersurface in $\PP^5$ defined by
 (\ref{eq:intro3}), so its dimension equals $4$.
 
 \smallskip
 
 We now discuss the organization of this paper and we summarize our results.
 Section~\ref{sec2} resolves the univariate case $(n=1)$.
Here the dimension equals the expected dimension.
 The variety  $\mathcal{T}^1_{d,e,m}$ lives in $\PP^m$ and it has dimension
 $d+e$, provided $d+e < m$. Its prime ideal is generated by the maximal minors of a
 Hankel matrix with $m-d$ rows and $e+1$ columns (Theorem \ref{thm:ideal:minors}).
Computing the kernel of this matrix is a key step for Pad\'e approximation.
 
In Section \ref{sec3} we turn to $n \geq 2$. We
introduce the Pad\'e matrix, which has a block Hankel structure.
Its ideal of maximal minors is generally not radical and can have multiple irreducible
components. Among them is the Taylor variety $\mathcal{T}^n_{d,e,m}$.
In Theorem  \ref{IdealPrime} we identify its prime ideal $\mathcal{I}^n_{d,e,m}$.
Computationally-minded readers can jump to Example  \ref{ex:2124} right now.
 
Our key finding is that Taylor varieties can be {\em defective}, {\em i.e.}~the
 inequality in (\ref{eq:expecteddimension}) is strict.
The smallest instance ($n=3,d=e=2,m=3$)
is worked out in detail in Proposition~\ref{prop:degenhyper}.

In Section \ref{sec4} we focus on Taylor varieties that are defined by a single polynomial,
so they have codimension one in their ambient space.
Some of these {\em Taylor hypersurfaces} exhibit a property that is of interest 
in projective geometry, namely their Hessian vanishes identically.

In Section \ref{sec5}  we derive a general formula for the dimension of
the Taylor variety $\mathcal{T}^n_{d,e,m}$.
This enables the computations, reported in
Table \ref{tab:defective}, which culminate in Conjecture \ref{conj:finite}.

In Section~\ref{sec6} we recast our dimension formula in terms of
Hilbert functions of ideals of generic forms.
This yields a link to
Fr\"oberg's Conjecture, which is a longstanding open problem
in commutative algebra. In  Theorem \ref{thm:FRob} and in Corollary \ref{cor(1)(2)},  this connection is used to prove Conjecture \ref{conj:finite} in some special cases. 
 We are grateful to
 Christian Krattenthaler for  suggesting the  proof of  Theorem \ref{thm:FRob}  and for allowing us to include it  in our paper.

\section{One Variable}
\label{sec2}

We consider polynomials in one variable $x$ of the form
$T = 1 + c_1 x + c_2 x^2 + \cdots + c_m x^m$.
The set of these polynomials is identified with the
vector space $\CC^m$ with coordinates $(c_1,c_2,\ldots,c_m)$.
Similarly, we identify $\CC^d$ and $\CC^e$ respectively with  the spaces of
 polynomials $P$ of degree $\leq d$ and 
 $Q$ of degree $\leq e$ such that $P(0) = Q(0) = 1$.
 We are interested in the polynomial map
 \begin{equation}
 \label{eq:mappsi}
\psi \,: \, \CC^d \times \CC^e\, \rightarrow \,\CC^m  \,,\,\, (P,Q) \,\mapsto \,\,
\hbox{the order $m$ Taylor polynomial of $\,P/Q $}. 
\end{equation}

We fix the projective space $\PP^m$ with coordinates $(c_0:c_1:\cdots:c_m)$.
The Taylor variety $\mathcal{T}^1_{d,e,m}$ is the closure in $\PP^m$ of the image of $\psi$.
In words, $\mathcal{T}^1_{d,e,m}$ 
is the smallest projective variety containing all polynomials $T$ of degree~$m$
whose Pad\'e approximation of type~$(d,e)$ is exact. We first exclude the trivial case when
the Taylor variety fills its ambient projective space.

\begin{lemma}
  \label{lem:dense}
Assume $d+e \geq m$. Then $\mathcal{T}^1_{d,e,m}=\PP^m$.
\end{lemma}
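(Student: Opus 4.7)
The plan is to show that the polynomial map $\psi$ of \eqref{eq:mappsi} is dominant. Since $\mathcal{T}^1_{d,e,m}$ is the closure of $\mathrm{Im}(\psi)$ in $\PP^m$, dominance immediately forces $\mathcal{T}^1_{d,e,m} = \PP^m$. The approach is to translate membership $T \in \mathrm{Im}(\psi)$ into a solvability question for a linear system in the $q_j$'s, and then verify consistency for generic coefficients of $T$.

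First I would write $P = 1 + p_1 x + \cdots + p_d x^d$, $Q = 1 + q_1 x + \cdots + q_e x^e$, $T = 1 + c_1 x + \cdots + c_m x^m$ and expand $P \equiv QT \pmod{x^{m+1}}$ degree by degree. The equations coming from degrees $1, \ldots, d$ are linear in the $p_k$'s and solve for them in terms of the $q_j$'s, imposing no constraint on $c$. The equations from degrees $d+1, \ldots, m$ involve only the $q_j$'s and, with the convention $c_i = 0$ for $i < 0$, read
\[
\sum_{j=1}^{e} c_{k-j}\, q_j \;=\; -c_k, \qquad k = d+1, \ldots, m.
\]
So $T \in \mathrm{Im}(\psi)$ is equivalent to the consistency of this $(m-d) \times e$ system. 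The hypothesis $d+e \geq m$ is the same as $e \geq m-d$, so the coefficient matrix $M(c) = (c_{d+i-j})$ has at least as many columns as rows; to guarantee consistency on a Zariski-open set it suffices to exhibit a single $(m-d) \times (m-d)$ minor of $M$ that is a non-zero polynomial in $c$.

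For this I would take the minor $\Delta = \det(c_{d+i-j})_{i,j=1}^{m-d}$ on the first $m-d$ columns. Expanding as a sum over $S_{m-d}$, each term has the form $\mathrm{sgn}(\sigma)\prod_{i} c_{d+i-\sigma(i)}$, and the monomial $c_d^{m-d}$ forces $d+i-\sigma(i) = d$ for every $i$, i.e.\ $\sigma = \mathrm{id}$. Hence $\Delta$ contains $+c_d^{m-d}$ and is, in particular, not the zero polynomial. For every $c$ with $\Delta(c) \neq 0$, the system is solvable in $(q_1, \ldots, q_e)$, and the degree-$\leq d$ equations then recover $(p_1, \ldots, p_d)$, producing a preimage of $T$ under $\psi$. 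Thus $\mathrm{Im}(\psi)$ contains the dense open set $\{\Delta \neq 0\}$ of $\CC^m$, and the lemma follows. The only substantive step is the non-vanishing of $\Delta$; the leading-monomial observation above is the main obstacle, and is short, so no deeper input is needed.
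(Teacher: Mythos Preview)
Your proof is correct and follows essentially the same approach as the paper: both reduce the question to the generic solvability of the linear system for the $q_j$'s coming from the coefficients of $QT$ in degrees $d+1,\ldots,m$, and conclude by exhibiting a dense open set where the relevant Hankel matrix has full row rank. The only differences are cosmetic: the paper first reduces to the square case $d+e=m$ and then asserts the matrix is generically nonsingular without further comment, whereas you work directly with the rectangular $(m-d)\times e$ system and explicitly verify non-vanishing of the leading $(m-d)\times(m-d)$ minor via the monomial $c_d^{\,m-d}$.
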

\begin{proof}
  We assume $d+e = m$ and claim that the map $\psi$ in \eqref{eq:mappsi} is dominant.
  This covers the case $d+e \geq m$ since $d$ and $e$ are upper bounds on the degrees.
  We note that the product $QT=(1+\sum_{i=1}^e q_i x^i)(\sum_{i=0}^m c_i x^i)$
  is a polynomial of degree    $\leq d$ modulo $x^{m+1}$ if and only if
  \begin{equation}\label{eq_system_lemma3}
  \begin{bmatrix}
  c_{m-1} & c_{m-2} & \cdots & c_d \\
  c_{m-2} & c_{m-3} & \cdots.& c_{d-1} \\
   \vdots & \vdots & \ddots & \vdots \\
   c_d & c_{d-1} & \cdots & c_{d-e+1} 
     \end{bmatrix} \cdot
  \begin{bmatrix}     q_1 \\  q_2 \\   \vdots \\     q_e   \end{bmatrix}
\,  \,\,=\,\,\,
  -   \begin{bmatrix}     c_m \\  c_{m-1} \\   \vdots \\     c_{d+1}   \end{bmatrix}  ,
  \end{equation}
  where $c_i=0$ if $i<0$. Let $\mathcal{U} \subset \PP^m$ be the
  non-empty Zariski open set of all polynomials $T$ such that the matrix on the left side of \eqref{eq_system_lemma3}
  is non-singular. Every $T \in \mathcal{U}$ has a unique 
  exact Pad\'e approximation $P/Q$ of type
  $(d,e)$. This shows that $\mathcal{U} \subseteq\psi(\CC^d \times \CC^e)$, as claimed.
\end{proof}

From now on we assume $d+e < m$. 
From the proof of \Cref{lem:dense}, one deduces that the Pad\'e approximation is unique when
$T \in \mathcal{T}^1_{d,e,m}$ is generic.
The map $\psi$ in \eqref{eq:mappsi} is birational onto its image. In  particular, the Taylor variety    $\mathcal{T}^1_{d,e,m}$
has the expected dimension $d+e$.

Fix the set of monomials  $M_{d+1,m} = \{x^{d+1},\ldots,x^{m}\}$.
Multiplication by a polynomial $T = 1 + c_1 x + \cdots + c_m x^m$
   defines a linear map
  from $\CC[x]_{\leq e}$ to $\CC[x]_{\leq e+m}$. Composing this map with the
  projection onto the linear span of $M_{d+1,m}$ in the polynomial ring $\CC[x]$ yields 
  \begin{equation}
  \label{eq:varphi1}
  \begin{array}{cccccl}
    \varphi_T : & \CC[x]_{\leq e} & \to     & \CC[x]_{\leq e+m} & \to     & \CC \{M_{d+1,m}\}, \\
    &               Q & \mapsto & QT                & \mapsto & QT \,\,\, \hbox{restricted to $M_{d+1,m}$.}
  \end{array}
\end{equation}
We fix the monomial bases for both the domain and the image.
For the domain we order that monomial basis by increasing degree, and for
the image we order it by decreasing degree.
With this convention,
    the  $(m-d) \times (e+1)$ matrix that
   represents the linear map $\varphi_T$ equals
   \begin{equation}
   \label{eq:convention}
  P_T \,\, = \,\,
\begin{bmatrix}
c_m & c_{m-1} & \cdots & c_{m-e} \\
c_{m-1} & c_{m-2} & \cdots & c_{m-e-1} \\
 \vdots & \vdots & \ddots & \vdots \\
c_{d+2} & c_{d+1} & \cdots & c_{d-e+2} \\
 c_{d+1} & c_{d} & \cdots & c_{d-e+1}
\end{bmatrix}.
\end{equation}
Here $d-e+1$ is allowed to be negative,
and we set $c_i = 0$ whenever $i < 0$.
By our assumption,
the number $m-d$ of rows of $P_T$ is greater or equal to the number
$e+1$ of columns of $P_T$.

\begin{example}[$d=0, e=m-1$]
This extreme case is the variety of
 reciprocals of polynomials, expanded up to
one order beyond their degree. Here the matrix $P_T$ is
square, and the last row and the column have only two
non-zero entries. For instance, for $m=4$ we have
$$ P_T \,\,= \,\, \begin{bmatrix} 
 c_4 & c_3 & c_2 & c_1 \\
  c_3 & c_2 & c_1 & c_0 \\
 c_2 & c_1 & c_0 & 0 \\
 c_1 & c_0 &  0  & 0 \\
\end{bmatrix} .
$$
The determinant of this $m \times m$ matrix is irreducible.
Our next result generalizes~this.  \exend 
\end{example}

We now turn to some commutative algebra
 in the polynomial ring $\CC[c] = \CC[c_0,c_1,\ldots,c_m]$. For a polynomial matrix $X$  and $s\in \NN$  we will denote by $I_s(X)$ the  ideal generated by all 
  $s \times s$ minors of $X$, and we set  $I_{\rm max}(X)=I_r(X)$
 when $r$ denotes the rank of $X$.  

\begin{theorem}
  \label{thm:ideal:minors}
  The ideal $I_{e+1}(P_T)$ generated by the $\,(e+1) \times (e+1)$ minors of the matrix 
 in (\ref{eq:convention})  is prime in $\CC[c]$.
  It defines an irreducible projective  variety of dimension $d+e$ in~$\PP^m$.
\end{theorem}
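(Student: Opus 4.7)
My approach is to establish three facts in sequence: the containment of $I_{e+1}(P_T)$ in the prime ideal of $\mathcal{T}^1_{d,e,m}$, a computation of the heights of both ideals, and the primality of $I_{e+1}(P_T)$ via Eisenbud's theory of $1$-generic matrices. These together force the two ideals to coincide and yield the theorem.

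\emph{Containment.} If $T=\psi(P,Q)$ with $\deg P\le d$, then $QT\equiv P\pmod{x^{m+1}}$, so the coefficients of $x^{d+1},\ldots,x^m$ in $QT$ all vanish. This is precisely the statement $\varphi_T(Q)=0$, so $\mathrm{rank}(P_T)\le e$ and every $(e+1)\times(e+1)$ minor of $P_T$ vanishes on $\mathcal{T}^1_{d,e,m}$. Hence $I_{e+1}(P_T)$ is contained in the prime ideal of $\mathcal{T}^1_{d,e,m}$.

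\emph{Heights.} The Taylor variety is irreducible as the image-closure of a morphism from affine space, and the birationality of $\psi$ observed after \Cref{lem:dense} gives $\dim\mathcal{T}^1_{d,e,m}=d+e$; thus its prime ideal has height $m-d-e$. On the other hand, $I_{e+1}(P_T)$ is the ideal of maximal minors of an $(m-d)\times(e+1)$ matrix of linear forms, so its height is at most $m-d-e=(m-d)-(e+1)+1$ by the standard bound on determinantal ideals.

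\emph{Primality.} I would invoke Eisenbud's theorem on $1$-generic matrices of linear forms to conclude that $I_{e+1}(P_T)$ is prime and Cohen--Macaulay of the expected codimension $m-d-e$. One-genericity of $P_T$ is transparent from the Hankel shape: the leftmost column is $(c_m,c_{m-1},\ldots,c_{d+1})^T$ and the top row is $(c_m,c_{m-1},\ldots,c_{m-e})$, each a string of distinct indeterminates, so any row (respectively, column) combination that vanishes identically must already vanish in this leading entry, forcing all coefficients to be zero. Combining the three steps, $I_{e+1}(P_T)$ is a prime ideal of height $m-d-e$ contained in a prime ideal of the same height, which forces equality.

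The main subtlety is verifying Eisenbud's $1$-genericity hypothesis in the regime $e\ge d+2$, where $P_T$ carries a triangle of forced zeros in its lower-right corner; the leading-column and leading-row observation above suffices, but one must notice that those zeros do not reach either the leftmost column or the topmost row. In the complementary range $e\le d+1$, where $P_T$ is a plain Hankel matrix of distinct variables, the primality is also covered by classical results of Gruson--Peskine and Conca on Hankel determinantal ideals.
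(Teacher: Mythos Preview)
Your overall strategy---invoke Eisenbud's theory of $1$-generic matrices---matches the paper's. The containment and height steps are unnecessary for the theorem as stated (they belong to the subsequent corollary identifying $I_{e+1}(P_T)$ with the prime ideal of $\mathcal{T}^1_{d,e,m}$), but they are not wrong.

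The genuine gap is in your verification of $1$-genericity when $e\ge d+2$. Recall that a matrix $M$ of linear forms is $1$-generic if $u^{\mathsf T}Mv\neq 0$ for \emph{every} pair of nonzero vectors $u,v$; equivalently, no entry can be made zero by row and column operations. In particular, a matrix that already has a zero entry is never $1$-generic: take $u,v$ to be the corresponding standard basis vectors. Since $P_T$ has a triangle of forced zeros in the lower-right corner whenever $e\ge d+2$, it is simply not $1$-generic in that range, and no amount of observing that the leftmost column and topmost row consist of distinct variables can change this. Your argument only shows that the rows and the columns of $P_T$ are linearly independent, which is a much weaker statement.

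The paper's fix is to use the stronger form of Eisenbud's theorem: for a $1$-generic $a\times b$ matrix with $a\ge b$, the maximal minors remain prime of the expected codimension after passing to any linear section of codimension at most $b-2$. When $e\ge d+2$, the matrix $P_T$ is obtained from the generic $(m-d)\times(e+1)$ Hankel matrix (which \emph{is} $1$-generic) by setting at most $e-1$ variables to zero, i.e.\ by a linear section of codimension at most $e-1=b-2$. This is what rescues the argument; your proposal needs to replace the flawed $1$-genericity check with this linear-section reasoning.
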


\begin{proof}
 A   generic Hankel matrix is $1$-generic in the sense of  Eisenbud \cite{E1, E2}. 
 Indeed, in  \cite[Theorem 1]{E1},
  Eisenbud proves a beautiful result which states that  the ideal of maximal minors of a $1$-generic  matrix of any format $a\times b$ (with $a\geq b$) is prime of the expected codimension $a-b+1$,
 and the same statement is true for any linear section of codimension $\leq b-2$.

 If $e \leq d+1$ then $P_T$ is a generic Hankel matrix and our conclusion follows immediately.
 Suppose now that $e >d+1$. The $(m-d) \times (e+1)$ matrix $P_T$ is obtained from  the generic Hankel matrix by setting to $0$ at most $e-1$ of the variables in that matrix.
  Hence the result mentioned above applies,  and  we conclude that  $I_{e+1}(P_T)$ is prime of codimension $m-d-e$. This particular  application of Eisenbud's
  result on $1$-generic matrices to special coordinate sections of generic Hankel matrices appears in more explicit form in \cite[Proposition 2.4]{CMRS}. 
  \end{proof}

\begin{corollary}
  \label{cor:univ}
  The Taylor variety $\mathcal{T}^1_{d,e,m} \subset \PP^m$ is irreducible of dimension $\min\{d+e,m\}$.
  If $d+e<m$, then $\mathcal{T}^1_{d,e,m} = \{\,T \in \PP^m \,\,:\,\, {\rm rank}(P_T) \leq e\,\}\,$
  and its prime ideal equals $I_{e+1}(P_T)$.
\end{corollary}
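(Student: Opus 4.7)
The plan is to bootstrap everything from Theorem \ref{thm:ideal:minors} together with Lemma \ref{lem:dense} and the birationality remark that follows it. First I would split on whether $d+e \geq m$ or $d+e < m$. In the former range, Lemma \ref{lem:dense} directly gives $\mathcal{T}^1_{d,e,m} = \PP^m$, which is irreducible of dimension $m = \min\{d+e,m\}$; the ideal assertion is not claimed in this range, so nothing more is needed. All of the real content concerns $d+e<m$.

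The first substantive step for $d+e < m$ is the set-theoretic containment $\mathcal{T}^1_{d,e,m} \subseteq V(I_{e+1}(P_T))$. Given $T$ in the image of $\psi$, so that $T$ is the order $m$ Taylor polynomial of some $P/Q$ of type $(d,e)$, we have $QT \equiv P \pmod{x^{m+1}}$ with $\deg P \leq d$. Therefore the coefficients of $QT$ at the monomials $x^{d+1}, \ldots, x^m$ all vanish, which in the language of (\ref{eq:varphi1}) says $\varphi_T(Q) = 0$. Since $Q(0)=1 \neq 0$, this kernel is nontrivial, so the matrix $P_T$, which has $e+1$ columns and represents $\varphi_T$, has rank at most $e$. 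Rank $\leq e$ is a closed condition, so this containment extends from the image of $\psi$ to its closure $\mathcal{T}^1_{d,e,m}$.

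To upgrade the containment to equality, I would compare dimensions. Theorem \ref{thm:ideal:minors} gives that $V(I_{e+1}(P_T))$ is irreducible of dimension $d+e$, while the remark right after Lemma \ref{lem:dense} gives the matching bound $\dim \mathcal{T}^1_{d,e,m} = d+e$, via the uniqueness of the generic Padé approximation (birationality of $\psi$ onto its image). Two irreducible projective varieties of the same dimension with one containing the other must coincide, giving the first identity. Finally, because $I_{e+1}(P_T)$ is already prime by Theorem \ref{thm:ideal:minors}, it equals its own radical and is therefore the prime ideal of $\mathcal{T}^1_{d,e,m}$.

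The main conceptual work has been done in Theorem \ref{thm:ideal:minors}; the only point requiring mild care here is the dimension count, which rests on the birationality observation following Lemma \ref{lem:dense}. Once those ingredients are combined with the elementary containment via $\varphi_T(Q)=0$, the corollary follows formally, with no separate analysis of the generators or their radical needed.
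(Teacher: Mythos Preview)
Your proof is correct and the overall architecture matches the paper's: split on $d+e \geq m$ versus $d+e<m$, and in the latter case establish $\mathcal{T}^1_{d,e,m} \subseteq V(I_{e+1}(P_T))$ via the kernel vector coming from $Q$, then identify the two varieties and invoke primality of $I_{e+1}(P_T)$ from Theorem~\ref{thm:ideal:minors}.

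The one place you diverge is in the reverse containment. You argue by dimension: both $\mathcal{T}^1_{d,e,m}$ and $V(I_{e+1}(P_T))$ are irreducible of dimension $d+e$ (the former by the birationality remark after Lemma~\ref{lem:dense}, the latter by Theorem~\ref{thm:ideal:minors}), so the inclusion forces equality. The paper instead exhibits an explicit nonempty open subset $\mathcal{V} \subset V(I_{e+1}(P_T))$ lying in $\psi(\CC^d \times \CC^e)$: namely, the locus where the last $e$ columns of $P_T$ are independent, so that any kernel vector can be scaled to have first coordinate~$1$ and hence gives a genuine denominator $Q$ with $Q(0)=1$. Your route is a bit slicker given that the birationality and the dimension $d+e$ have already been stated in the text; the paper's route is more self-contained and has the bonus of describing concretely which points of the rank locus are honest Taylor polynomials rather than limits.
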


\begin{proof}
The variety  $\mathcal{T}^1_{d,e,m}$ is irreducible, as it is the closure of the image
of a polynomial~map. If $d+e \geq m$, then $\mathcal{T}^1_{d,e,m} = \PP^m$ by 
\Cref{lem:dense},  and we are done. We thus  assume $d+e < m$.

    \Cref{thm:ideal:minors} tells us that  $V({I_{e+1}(P_T)})$ is irreducible.
  We claim that it equals   $ \mathcal{T}^1_{d,e,m}$.
  If $T \in\psi(\CC^d \times \CC^e)$ then there exists
  $Q \in \CC[x]_{\leq e}$ with $Q(0)=1$ and the terms of $QT$ of degree
  from $d+1$ to $m$ vanish. In matrix notation, $P_T \, (1,q_1,\ldots,q_e)^t = {\bf 0}^t$.
  Hence the $(e+1) \times (e+1)$ minors of $P_T$ are zero.
  Therefore, $V(I_{e+1}(P_T))$  contains
 $\psi(\CC^d \times \CC^e)$ and its closure
  $\mathcal{T}^1_{d,e,m}$.
  
  For the converse, we will show that
    $\mathcal{T}^1_{d,e,m}$ contains an open subset of $V(I_{e+1}(P_T))$.
      Let $\mathcal{C} \subset V(I_{e+1}(P_T))$ be the closed set of 
  all $T$ such that
  the last $e$ columns of $P_T$ are linearly dependent. 
  Then $\mathcal{V} = V(I_{e+1}(P_T)) \backslash \mathcal{C}$
  is  non-empty and open in $V(I_{e+1}(P_T))$.
   If $T \in \mathcal{V}$ then the kernel of $P_T$ has a vector with first coordinate $1$.
   Hence $T \in\psi(\CC^d \times \CC^e)$, and
    $\mathcal{V} \subset
  \mathcal{T}^1_{d,e,m}$.
We conclude that $V({I_{e+1}(P_T)}) = \mathcal{T}^1_{d,e,m}$, and hence
  $\,I(\mathcal{T}^1_{d,e,m}) = \sqrt{{I_{e+11}(P_T)}} = {I_{e+1}(P_T)}$.
  \end{proof}

\begin{example}[$ e = d+1$] \label{ex:isoneless}
The degree of the numerator is one less than that of the denominator.
Here the Taylor variety $\mathcal{T}^1_{d,d+1,m}$ is a
secant variety of the rational normal curve,~and
  $$
  P_T \,\, = \,\,
    \begin{bmatrix}
    c_m     & c_{m-1} & \cdots & c_{m-d-1} \\
    c_{m-1} & c_{m-2} & \cdots & c_{m-d-2}   \\
    \vdots  & \vdots  &   \ddots     & \vdots    \\
    c_{d+1} & c_{d}   & \cdots &   c_0     \\
  \end{bmatrix}
  $$
  is the catalecticant in degrees $(m-d-1,d+1)$ of the binary form
  $f = \sum_{i=0}^m \binom{m}{i} c_i X^{i} Y^{m-i}$.
In symbols, $\mathcal{T}^1_{d,d+1,m} = \sigma_{d+1}(v_{1,m}(\PP^1)) \, = \,\{T : 
\text{rank}(P_T) \leq d+1\}$. This variety comprises
   binary forms $f$ of degree $m$ that are sums of  $d+1$
  $m$-th powers of linear forms in $X$ and $Y$.

  When $m=2d+2$, we get a hypersurface.
For instance, for $m=4,d=1$, the hypersurface  $\mathcal{T}^1_{1,2,4} \subset \PP^4$
is defined by the $3 \times 3$ Hankel matrix. It
comprises quartics
  $f=c_0Y^4+4c_1XY^3+6c_2X^2Y^2+4c_3X^3Y+c_4X^4$ that are
  sums of at most two fourth powers of linear forms.
    \exend
\end{example}

We conclude by summarizing the four cases
of what the Taylor variety for $n=1$ can be:
\begin{itemize}
\item
  If $e+d\geq m$ then  $\mathcal{T}^1_{d,e,m}$ is equal to $\PP^m$.
\item
  If $e+d< m$ and $e=d+1$ then $\,\mathcal{T}^1_{d,d+1,m}=\sigma_{d+1}(v_{1,m}(\PP^1))
  \subset \PP^m\,$
  is the $(d+1)$-st secant variety to the rational normal curve of degree $m$. 
  \item
  If $e+d< m$ and $e<d+1$,  then our Taylor variety is a cone with apex $\PP^{d-e}$, namely
$$ \mathcal{T}^1_{d,e,m} \,\,=\,\, \mathcal{T}^1_{e-1,e,D} \,\star\, \PP^{d-e}
  \,\, =\,\, \sigma_{e}(v_{1,D}(\PP^1)) \,\star\, \PP^{d-e} \,\, \subset \,\,\PP^m $$
  with $D=m-d+e-1$. 
  This is like the last case, with coordinates $c_0,\ldots,c_{d-e}$ missing.
\item
If $e+d< m$ and $e>d+1$, then the Taylor variety is a linear section of the secant variety
above. Namely, setting  $D' = m+d-e+1$, we have
   $\mathcal{T}^1_{d,e,m} = \sigma_{e}(v_{1,D'}(\PP^1)) \cap H$ where $H \subset \PP^{D'}$
  is the linear space defined by the vanishing of $e-d-1$ coordinates.
  \end{itemize}
We conclude that the Taylor variety for $n=1$ is
a classical object that is well-understood.
In the next sections we shall see that the
situation is different and more interesting for $n \geq 2$.

\section{Pad\'e Matrix}
\label{sec3}

The Taylor variety $\mathcal{T}^n_{d,e,m} \subset \PP^{\binom{n+m}{n}-1}$
was defined by
the parametrization in (\ref{eq:Pade}). Section \ref{sec2} offered
an implicit representation  for $n=1$. In what follows, we generalize
this to $n \geq 2$.
Let $\CC[x]$ denote the polynomial ring in $n$ variables $x = (x_1,\ldots,x_n)$.
Fix integers $d,e,m \geq 0$.
Write $\CC[x]_{\leq e}$  for
the $\binom{n+e}{n}$-dimensional
space of polynomials of degree at most $e$,
and $M_{d+1,m}$ for the set of monomials in $\CC[x]_{\leq m} $ but not in $ \CC[x]_{\leq d}$.
We have $| M_{d+1,m} | =  \binom{n+m}{n} - \binom{n+d}{n} $.

Consider a polynomial $T=\sum_{|\gamma| \leq m} c_\gamma x^\gamma$
in $\CC[x]_{\leq m}$ with $T(0) = c_0 = 1$.
Let $\varphi_T$ denote the $\CC$-linear map defined by
the same formula as in (\ref{eq:varphi1}).
But that formula is now meant for a polynomial ring $\CC[x]$
in $n$ variables $x= (x_1,\ldots,x_n)$.
  The {\em Pad\'e matrix} $P_T$
 represents $\varphi_T$ with respect to the monomial bases.
The rows of $P_T$ are indexed by $M_{d+1,m}$,
ordered decreasingly by total degree, and the columns of $P_T$ 
are indexed by $M_{0,e}$, ordered increasingly by total degree.
The entry of $P_T$ in row $x^\alpha \in M_{d+1,m}$ 
and column $x^\beta \in M_{0,e}$ equals $c_{\alpha-\beta}$ if
$\beta \leq \alpha$ and  $0$ otherwise.
The Pad\'e matrix has format $(\binom{m+n}{n} - \binom{d+n}{n}) \times
\binom{e+n}{n}$, and it has a block Hankel structure.
That structure looks like  (\ref{eq:convention})
but now the blocks have different sizes.

\begin{example}[$n=2,d=e=2,m=5$] 
Just like in (\ref{eq:intro1}),
let us consider the fifth Taylor polynomial $T$ of two quadrics,
but now in two variables. The Pad\'e matrix has
the structure 
\begin{equation}
\label{eq:lookslike} P_T \,\, = \,\, 
 \begin{bmatrix}
C_5 & C_4' & C_3'' \\
C_4 & C_3' & C_2' \\
C_3 & C_2 & C_1  \end{bmatrix} .
\end{equation}
This looks like the matrix in (\ref{eq:intro3}), but $P_T$ has $15 = 6+5+4$ rows and
$6 = 1+2+3$~columns. Each block with index $i$ represents multiplication with
the degree $i$ component of $T$, {\it e.g.}~the $4 \times 2$ matrix $C_2:\CC[x]_1 \rightarrow \CC[x]_3$
and the $5 \times 3$ matrix $C_2':\CC[x]_2  \rightarrow \CC[x]_4$
are multiplication with the quadratic part of $T$.
The column $(C_5,C_4,C_3)^t$ represents
$\CC \rightarrow \CC\{M_{2,4}\}, c \mapsto cT$.
\exend
\end{example}

We now describe the block structure in general, since this will be important
later on. Let $T_i$ denote the homogeneous component in degree $i$ of the 
inhomogeneous polynomial $T \in \CC[x]_{\leq m}$.
For every degree $j$, multiplication by $T_i$ defines a $\CC$-linear map $\CC[x]_j \rightarrow \CC[x]_{i+j}$.
Let $C_i$ denote the matrix which represents the linear map $T_i$
with respect to the monomial~bases. From now on, we do not record the index $j$, but
we assume that $j$ is clear from the context.
The Pad\'e matrix $P_T$ is the aggregate of these blocks. Its block structure is independent of~$n$.
With this convention, the matrix $P_T$ in 
 (\ref{eq:lookslike}) now has
$C_4' \mapsto C_4$,
$C_3', C_3'' \mapsto C_3$ and
$ C_2' \mapsto C_2$.

\begin{example}[$d=3,e=4,m=5$] \label{ex:2by5}
For the fifth Taylor polynomial of cubic over quartic,
\begin{equation} \label{eq:2by5}  P_T \quad = \quad  \begin{bmatrix} 
C_5 & C_4 & C_3 & C_2 & C_1 \\
C_4 & C_3 & C_2 & C_1 & C_0 
\end{bmatrix}.
\end{equation}
The number of rows is $\binom{n+4}{5} + \binom{n+3}{4}$,
and the number of columns is 
$\,1 + n + 
\binom{n+1}{2} + 
\binom{n+2}{3} + 
\binom{n+3}{4} $.
This Pad\'e matrix has a distinguished
column vector in its kernel, written schematically as 
\begin{equation} \label{eq:2by5kernel} 
\begin{bmatrix} 
\,\,0 \,&\, 0\, & T_2 T_0 - T_1^2 &\, T_2 T_1- T_3 T_0 &\, T_3 T_1 - T_2^2
\, \end{bmatrix}^t.
\end{equation}
The five entries in (\ref{eq:2by5kernel})
are the coefficient vectors 
of the homogeneous components of the polynomial
$\, (T_2 T_0 - T_1^2) + (T_2 T_1- T_3 T_0) + (T_3 T_1 - T_2^2)$.
Do check that this is in the kernel of $\varphi_T$.
Our notation alludes to
 Cramer's rule for the 
$2\times 3$ submatrix on the right of $P_T$.
\exend
\end{example}

Let $\mathcal{I}^n_{d,e,m}$ denote the homogeneous prime ideal 
that defines the irreducible variety $\mathcal{T}^n_{d,e,m}$.
This is an ideal  in the polynomial ring $\CC[c]=\CC\bigl[c_\gamma : |\gamma| \leq m \bigr] $.
Our goal in this section is to 
determine the ideal $\mathcal{I}^n_{d,e,m}$
from the Pad\'e matrix $P_T$. We begin with some examples.

\begin{example}[$n=2,m=3$]
\label{ex:ternarycubics}
Here the ambient space is $\PP^9$ with coordinates $c_{00},c_{01},\ldots,c_{30}$.
The first two cases to consider are $(d,e) = (2,1)$ and $(d,e) = (1,2)$. The  Pad\'e matrices~are
$$
P_T  = 
\begin{small} \begin{bmatrix} C_3 & \! C_2 \end{bmatrix} = 
\begin{bmatrix}
 c_{30} & 0 & c_{20} \\
 c_{21} & c_{20} & c_{11} \\
 c_{12} & c_{11} & c_{02} \\
 c_{03} & c_{02} & 0
 \end{bmatrix}
  \end{small}
\,\,\,\, {\rm and} \quad P_T = 
\begin{small}
\begin{bmatrix} C_3 & \! C_2 & \! C_1 \\ C_2 & \! C_1 & \! C_0 \end{bmatrix} = 
\begin{bmatrix}
 c_{30} & 0 & c_{20} & 0 & 0 & c_{10} \\
 c_{21} & c_{20} & c_{11} & 0 & c_{10} & c_{01} \\
 c_{12} & c_{11} & c_{02} & c_{10} & c_{01} & 0 \\
 c_{03} & c_{02} & 0 & c_{01} & 0 & 0 \\
 c_{20} & 0 & c_{10} & 0 & 0 & c_{00} \\
 c_{11} & c_{10} & c_{01} & 0 & c_{00} & 0 \\
 c_{02} & c_{01} & 0 & c_{00} & 0 & 0
 \end{bmatrix}\! .
 \end{small}
$$
In both cases,  the prime ideal
$\mathcal{I}^2_{d,e,3}$ is generated by
the maximal minors of $P_T$, and it is
  Cohen-Macaulay of codimension~$2$.
We conclude that $\mathcal{T}^2_{2,1,3}$ has degree $6$ and 
its ideal $\mathcal{I}^2_{2,1,3}$
is generated by four cubics,
while $\mathcal{T}^2_{1,2,3}$ has degree $21$ and 
$\mathcal{I}^2_{1,2,3}$ is generated  by seven sextics.

The case $(d,e) = (1,1)$ is more interesting. The $4$-dimensional Taylor variety $\mathcal{T}^2_{1,1,3}$
represents Taylor cubics for ratios of two bivariate linear polynomials.
Its Pad\'e matrix~equals
$$ P_T \,\, = \,\,
\begin{bmatrix} C_3 &  C_2 \\ C_2 & C_1 \end{bmatrix} \,\,=\,\,
\begin{small}
\begin{bmatrix}
c_{30} & 0 & c_{20} \\
 c_{21} & c_{20} & c_{11} \\
 c_{12} & c_{11} & c_{02} \\
 c_{03} & c_{02} & 0 \\
 c_{20} & 0 & c_{10} \\
 c_{11} & c_{10} & c_{01} \\
 c_{02} & c_{01} & 0 \\
 \end{bmatrix}.
 \end{small}
$$
The ideal of maximal minors of $P_T$ is Cohen-Macaulay of 
expected codimension $5$ and degree $21$. But this ideal is not prime.
It is the intersection of the prime ideal  $\mathcal{I}^2_{1,1,3}$
with a primary ideal of degree $9$ whose radical is
$\langle c_{20},c_{11},c_{02},c_{10},c_{01} \rangle$.
Hence $\mathcal{T}^2_{1,1,3}$ has degree $12$ in $\PP^9$.
The ideal $\mathcal{I}^2_{1,1,3}$ is not Cohen-Macaulay. It is generated by
 $5$ quadrics, $16$ cubics and $1$ quartic.
\exend
\end{example}

Let $q = (q_\beta : \beta \in M_{0,e})$ be the column vector of coefficients of $Q$.
Then $S=\CC[c,q]$ is a polynomial ring in $\binom{n+m}{n}+\binom{n+e}{n}$ unknowns.
Let $J \subset S$ be the ideal generated by the entries of the column vector $P_T \cdot q$.
The variety $V(J)$ consists of pairs of polynomials $(T,Q)$ such that the 
product $TQ$ contains no monomials in $M_{d+1,m}$. 
The ideal saturation $(J:q_0^{\infty})$ describes the closure
of the pairs $(T,Q)$ such that
$Q(0) \not= 0$ and $TQ$ has no monomials in $M_{d+1,m}$
The projection of this variety $V \bigl( (J:q_0^{\infty})\bigr)$
onto the $T$-coordinates is the Taylor variety $\mathcal{T}^n_{d,e,m}$.
However, more is true: the projection gives the prime ideal 
that defines $\mathcal{T}^n_{d,e,m}$.

\begin{theorem}
  \label{IdealPrime}
The elimination ideal $\,  (J:q_0^{\infty}) \cap \CC[c]$
coincides with the prime ideal
    $\,\mathcal{I}^n_{d,e,m} $.
        \end{theorem}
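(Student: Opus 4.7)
The plan is to first prove that $(J : q_0^\infty)$ is already prime in $S = \CC[c,q]$, from which primality of the contraction $(J : q_0^\infty)\cap\CC[c]$ follows automatically, and then to identify this elimination ideal's vanishing locus with the affine cone of $\mathcal{T}^n_{d,e,m}$. For primality I would exploit the block-Hankel form of $P_T$: the generator of $J$ indexed by $\alpha \in M_{d+1,m}$ has the shape
\[
q_0\, c_\alpha \;+\; \sum_{0<|\beta|\leq e,\;\beta\leq\alpha} c_{\alpha-\beta}\, q_\beta ,
\]
so once we invert $q_0$ it uniquely solves for $c_\alpha$. Processing these in order of increasing $|\alpha|$ yields a triangular system, since every $c_{\alpha-\beta}$ on the right has $|\alpha-\beta|<|\alpha|$ and is therefore either in the free range $|\gamma|\leq d$ or already eliminated. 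This produces a $\CC$-algebra isomorphism
\[
(S/J)[q_0^{-1}] \;\xrightarrow{\sim}\; \CC\bigl[c_\gamma : |\gamma|\leq d,\; q_\beta : |\beta|\leq e\bigr][q_0^{-1}] ,
\]
whose target is a localized polynomial ring, hence a domain. The kernel of $S \to (S/J)[q_0^{-1}]$ is exactly $(J : q_0^\infty)$, so this ideal is prime.

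Next I would identify varieties. By elimination theory, $V\bigl((J:q_0^\infty)\cap\CC[c]\bigr)$ is the Zariski closure of $\pi_c\bigl(V(J)\cap\{q_0\neq 0\}\bigr)$, where $\pi_c$ projects onto the $c$-coordinates. For $(T,Q)$ in the latter with $Q(0)\neq 0$, the equations $P_T\cdot q=0$ say exactly that the degree $d+1,\ldots,m$ components of $TQ$ vanish; taking $P$ to be the degree $\leq d$ part of $TQ$ gives $TQ\equiv P\pmod{(x_1,\ldots,x_n)^{m+1}}$, so $T$ is the order $m$ Taylor polynomial of the rational function $P/Q$ of type $(d,e)$. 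On the dense open subset $\{c_0\neq 0\}$ of this image, rescaling $P$ and $Q$ to have constant term $1$ shows that $T$ lies, up to a scalar, in $\mathcal{T}^n_{d,e,m}$; since $J$ is homogeneous in $c$, passing to closure recovers the entire affine cone of $\mathcal{T}^n_{d,e,m}$. The reverse inclusion is immediate from the parametrization \eqref{eq:Pade}.

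Combining the two halves, $(J:q_0^\infty)\cap\CC[c]$ is a prime ideal whose vanishing locus equals $\mathcal{T}^n_{d,e,m}$, and therefore coincides with $\mathcal{I}^n_{d,e,m}$. The main obstacle I expect is making the triangular elimination in the first step fully rigorous: one must verify not only that each $c_\alpha$ with $\alpha\in M_{d+1,m}$ can be recursively expressed in terms of the remaining variables, but also that the resulting presentation of $(S/J)[q_0^{-1}]$ has no leftover relations. This is the place where the precise form of the block-Hankel generators, and in particular the role of $q_0$ as the distinguished coefficient of $c_\alpha$ in the $\alpha$-th relation, is essential.
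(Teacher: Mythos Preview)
Your proposal is correct and follows essentially the same route as the paper's proof: both arguments invert $q_0$ and use the triangular shape of the generators $q_0 c_\alpha + \sum q_\beta c_{\alpha-\beta}$ to exhibit $S_{q_0}/JS_{q_0}\cong (S/J)[q_0^{-1}]$ as a localized polynomial ring in the variables $c_\gamma$ with $|\gamma|\leq d$ and the $q_\beta$, hence a domain, so that $(J:q_0^\infty)$ is prime. Your set-theoretic identification of the elimination ideal's zero locus with $\mathcal{T}^n_{d,e,m}$ is spelled out in more detail than the paper's, but the underlying logic is the same.
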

    
  \begin{proof} 
  We already know that $  (J:q_0^{\infty}) \cap \CC[c]$ defines $\mathcal{T}^n_{d,e,m}$  as a set.
    Since elimination ideals of prime ideals are prime, it suffices to prove that  $(J:q_0^{\infty})$ is prime.
    Since $(J:q_0^{\infty})$ is the contraction to $S$ of the localization $JS_{q_0}$, we may as well prove that $JS_{q_0}$ is prime in $S_{q_0}$.    The generators of $J$ correspond to
  the rows of the Pad\'e matrix $P_T$, so they are indexed by $\alpha\in M_{d+1,m}$. 
    More precisely, for each
  $\alpha\in M_{d+1,m}$ we have a generator of $J$ of the form
  \begin{equation} 
\label{eq:ofW2} 
q_0c_{\alpha}+  \sum q_\beta c_\gamma \qquad   \mbox{ where the sum is over }
\{ (\beta, \gamma)\in M_{1,e}\times M_{0,m}  \,:\, \beta+\gamma=\alpha\}.
\end{equation}
In the quotient ring $S_{q_0}/JS_{q_0}$ we may use (\ref{eq:ofW2}) to write  $c_{\alpha}$ 
as a $\CC[q]$-linear combination of $c_\gamma$ with $|\gamma|<|\alpha|$. Therefore $S_{q_0}/JS_{q_0}$ is isomorphic to the polynomial ring  $\CC[c_\alpha : \alpha\in M_{0,d} ][ q]$ localized at $q_0$. 
Since this localization is a domain, we conclude that $JS_{q_0}$ is a prime ideal.   \end{proof} 
  
In our situation, the saturation can be carried out by
computing with non-homogeneous ideals, as follows.
Let $q'$ be the vector obtained from $q$ by setting the first coordinate $q_0$ to~$1$.

\begin{corollary} \label{cor:prime}
The prime ideal of the Taylor variety $\,\mathcal{T}^n_{d,e,m}$ 
can be obtained as follows:
\begin{equation}
\label{eq:idealelimination}
 \mathcal{I}^n_{d,e,m} \,\, = \,\,
 \bigl\langle P_T \cdot q'  \bigr\rangle \,\, \cap \,\, \CC[c].
\end{equation}
\end{corollary}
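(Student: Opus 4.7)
The strategy is to deduce \Cref{cor:prime} directly from \Cref{IdealPrime} by replacing the saturation $(J : q_0^\infty) \cap \CC[c]$ with the elimination ideal obtained after setting $q_0 = 1$. The key observation is that the generators of $J = \langle P_T \cdot q \rangle$ are linear in $q = (q_0, q_1, \ldots, q_N)$, so $J$ is homogeneous of degree one in the $q$-variables; in this situation, saturation by $q_0$ coincides with dehomogenization at the affine chart $q_0 = 1$. This is the standard projective-to-affine chart dictionary from commutative algebra, so the whole argument is essentially a translation.

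Concretely, I would introduce fresh variables $\tilde q_1, \ldots, \tilde q_N$ and consider the $\CC[c]$-algebra homomorphism
$$
\Phi : \CC[c,q] \;\to\; \CC[c,\tilde q_1,\ldots,\tilde q_N][q_0, q_0^{-1}], \qquad q_0 \mapsto q_0, \quad q_i \mapsto q_0 \tilde q_i \ (i \geq 1).
$$
Extended to $\CC[c, q]_{q_0}$, the map $\Phi$ becomes an isomorphism with inverse $\tilde q_i \mapsto q_i / q_0$. Since each entry $(P_T \cdot q)_\alpha$ is linear in $q$, its image under $\Phi$ factors as $q_0 \cdot (P_T \cdot \tilde q')_\alpha$ with $\tilde q' = (1, \tilde q_1, \ldots, \tilde q_N)$. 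Because $q_0$ is a unit on the right-hand side, $\Phi$ descends to a $\CC[c]$-algebra isomorphism
$$
\CC[c,q]_{q_0}/J\,\CC[c,q]_{q_0} \;\cong\; \bigl(\CC[c,\tilde q_1,\ldots,\tilde q_N]/J'\bigr)[q_0, q_0^{-1}],
$$
where $J' = \langle P_T \cdot \tilde q' \rangle$ is the ideal in \eqref{eq:idealelimination} after renaming $\tilde q_i$ back to $q_i$.

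The proof then closes with a brief kernel chase. By definition, $(J : q_0^\infty) \cap \CC[c]$ is the kernel of the natural map $\CC[c] \to \CC[c,q]_{q_0}/J\,\CC[c,q]_{q_0}$, while $J' \cap \CC[c]$ is the kernel of $\CC[c] \to \CC[c, \tilde q_1, \ldots, \tilde q_N]/J'$. The isomorphism $\Phi$ identifies these two kernels, once one notes that adjoining the Laurent polynomial variable $q_0$ does not enlarge the kernel coming from $\CC[c]$, since $q_0$ is algebraically independent over the base. Combining this equality with \Cref{IdealPrime} yields $\mathcal{I}^n_{d,e,m} = \langle P_T \cdot q' \rangle \cap \CC[c]$, which is the claimed identity.

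The main obstacle is essentially bookkeeping rather than mathematical substance: the only things that need careful verification are that $\Phi$ is a well-defined $\CC[c]$-algebra isomorphism on the localization and that $q$-linearity of the generators of $J$ is really what turns saturation by $q_0$ into evaluation at $q_0 = 1$. No deeper difficulty appears, which is why this is stated as a corollary rather than as a second theorem.
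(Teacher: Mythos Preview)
Your proposal is correct and follows essentially the same approach as the paper: both exploit that the generators of $J$ are homogeneous of degree one in the $q$-variables, so saturation by $q_0$ coincides with passing to the affine chart $q_0=1$. The paper writes out the two inclusions by hand---setting $q_0=1$ in an identity $f(c)q_0^m = g(c,q)P_T(c)q$ for one direction, and substituting $q_i\mapsto q_i/q_0$ followed by clearing denominators for the other---whereas you package the same dehomogenize/rehomogenize maneuver into the single $\CC[c]$-algebra isomorphism $\Phi$ and a kernel comparison; the mathematical content is identical.
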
 

\begin{proof}
Both ideals involve only $c$-variables,
and they are homogeneous in these variables.
We must therefore show that a homogeneous polynomial
$f(c)$ lies in $\mathcal{I}^n_{d,e,m}$
if and only if $f(c)$ is in the right hand side of (\ref{eq:idealelimination}).
We write $q' = [1,q'']^t$.
Suppose $ f(c)$ lies in $\mathcal{I}^n_{d,e,m}$.
By Theorem \ref{IdealPrime}, there exists $m \in \NN$
and a polynomial vector $g(c,q)$ 
such that  $ f(c) q_0^m = g(c,q)  P_T(c) q $.
Setting $q = 1$ in this identity, we see that
    $ f(c) = g(c,1,q'')  P_T(c) q'$
 lies in 
$ \bigl\langle P_T \cdot q'  \bigr\rangle \, \cap \, \CC[c]$.

Conversely, let $ f(c)$ be in 
$ \bigl\langle P_T \cdot q'  \bigr\rangle \, \cap \, \CC[c]$.
There is a row vector $ h(c,q') $ such that $   f(c) = h(c,q') P_T(c) q'$.
We divide each variable in $q'$ by $q_0$, and obtain        $ f(c) = h(c,q'/q_0)  P_T(c) (q'/q_0)$.
By clearing denominators, we obtain an identity         $ f(c) q_0^\nu = g(c,q)  P_T(c) q $
for some $\nu \in \NN$, where $g$   is the homogenization of $h$. 
 This shows that $f(c)$ lies in $\mathcal{I}^n_{d,e,m}$, by Theorem \ref{IdealPrime}.
\end{proof}

 Theorem \ref{IdealPrime} and Corollary \ref{cor:prime}
 show us how to get $\mathcal{I}^n_{d,e,m}$ in
 a computer algebra system. In light of
  \Cref{ex:ternarycubics}, one suspects that
 $\mathcal{I}^n_{d,e,m}$ can be obtained from the
 ideal $I_{\rm max}(P_T)$ of maximal non-vanishing minors of $P_T$ by saturation.
 This is presently only a conjecture.
To state it precisely,
let $\hat P_T$ be the matrix obtained from $P_T$ by deleting the leftmost column.
The corresponding linear map $\hat \varphi_T$ is the restriction of $\varphi_T$ to the
 subspace $\{ Q \in \CC[x]_{\leq e}: Q(0) = 0\}$. 

  \begin{conjecture} \label{conj:IJ}
 The prime ideal of the  variety $\mathcal{T}^n_{d,e,m}$ in
 the polynomial ring $\CC[c]$ satisfies
  \begin{equation}
\label{eq:idealsaturation}
 \mathcal{I}^n_{d,e,m} \,\, = \,\,
 \bigl(\, I_{\rm max} ( P_T) \,: \,I_{\rm max} (\hat P_T)^\infty \,\bigr) .
 \end{equation}
This is
the ideal of maximal non-vanishing minors of the Pad\'e matrix $P_T$, saturated by the
ideal of maximal non-vanishing  minors of the reduced Pad\'e matrix $\hat P_T$. 
See also Theorem~\ref{thm:dimm}.
\end{conjecture}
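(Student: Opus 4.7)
The plan is to establish the two inclusions of \eqref{eq:idealsaturation} separately, under the tacit assumption that $\mathcal{T}^n_{d,e,m}$ is a proper subvariety of its ambient projective space (otherwise $\mathcal{I}^n_{d,e,m}=0$ while the right-hand side of \eqref{eq:idealsaturation} is the unit ideal, so the statement must exclude this case). A preliminary observation is that in this non-trivial regime the generic rank $r_P$ of $P_T$ exceeds the generic rank $r$ of $\hat P_T$ by exactly one: otherwise the first column of $P_T$ would already lie in the column span of the remaining columns for a generic $T$, producing a kernel vector with nonzero $q_0$-coordinate and forcing $\mathcal{T}^n_{d,e,m}$ to fill the ambient space.

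For the inclusion $\supseteq$ it suffices to verify two facts. First, $I_{\rm max}(P_T)\subseteq \mathcal{I}^n_{d,e,m}$: every $T\in \mathcal{T}^n_{d,e,m}$ has its first $P_T$-column in the span of $\hat P_T$, so ${\rm rank}(P_T)\le r<r_P$, and hence the $r_P$-minors of $P_T$ vanish on $\mathcal{T}^n_{d,e,m}$. Second, $I_{\rm max}(\hat P_T)\not\subseteq \mathcal{I}^n_{d,e,m}$: for a generic pair $(P,Q)$ in the parametrization one checks that the unique Cramer solution to $P_T q=0$ forces ${\rm rank}(\hat P_T)=r$ at $T=P/Q$. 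Since $\mathcal{I}^n_{d,e,m}$ is prime by \Cref{IdealPrime}, the second fact gives $\mathcal{I}^n_{d,e,m}:I_{\rm max}(\hat P_T)^\infty=\mathcal{I}^n_{d,e,m}$, and combining this with the first yields the claimed containment.

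The reverse inclusion is the crux and demands a scheme-theoretic, not merely set-theoretic, statement. The approach is to pass to the open affine chart $U_M$ of $\PP^{\binom{n+m}{n}-1}$ obtained by inverting a single $r\times r$ minor $M$ of $\hat P_T$ (supported on some row set $S$), and to prove the localized identity
\[
I_{\rm max}(P_T)\cdot \CC[c][M^{-1}] \;=\; \mathcal{I}^n_{d,e,m}\cdot \CC[c][M^{-1}].
\]
On $U_M$, Cramer's rule applied to the rows $S$ solves $\hat P_T\, q''=-P_T[\,\cdot\,,0]\,q_0$ uniquely for $q''$ in terms of $q_0$, and the consistency of this candidate with each remaining row $i\notin S$ becomes, after multiplication by $M$, precisely the $(r+1)\times(r+1)$ minor of $P_T$ with row set $S\cup\{i\}$. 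Letting $S$ range over all rank-$r$ choices, one sees that the maximal minors of $P_T$ and the elimination ideal of \Cref{IdealPrime} cut out the same subscheme set-theoretically on $U_M$; contracting back from the cover by the charts $U_M$ then recovers the intersection defining the saturation.

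The hard part is upgrading this set-theoretic coincidence on each $U_M$ to an ideal-theoretic one, i.e.\ ruling out embedded primary components of $I_{\rm max}(P_T)$ whose support meets $U_M$. For a generic matrix of the same format this would follow from the Cohen-Macaulayness of the ideal of maximal minors furnished by the Eagon-Northcott complex; for true Hankel matrices, as in the $n=1$ case of \Cref{thm:ideal:minors}, $1$-genericity in the sense of Eisenbud is enough. The block-Hankel matrix $P_T$ for $n\ge 2$, however, has many patterned zeros and is not $1$-generic in general, so it sits outside these classical frameworks. Progress will likely require either realizing $P_T$ as a coordinate section of a larger $1$-generic matrix in the spirit of \cite[Proposition~2.4]{CMRS}, or a direct linkage or deformation argument showing that $I_{\rm max}(P_T)$ has no embedded primes outside $V(I_{\rm max}(\hat P_T))$ in the cases at hand. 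This is where the real work of proving \Cref{conj:IJ} lies.
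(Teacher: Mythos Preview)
The paper presents this statement as a \emph{Conjecture}, explicitly writing ``This is presently only a conjecture.'' There is no proof in the paper to compare against; the authors verify \eqref{eq:idealsaturation} only computationally in specific instances (e.g.\ for $\mathcal{T}^3_{2,2,3}$ in the proof of Proposition~\ref{prop:degenhyper}, and for the cases in Example~\ref{ex:ternarycubics}) and leave the general assertion open.

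Your proposal is candid about its own incompleteness. You isolate the containment $\supseteq$ and then correctly identify the obstruction to $\subseteq$: one must rule out embedded primary components of $I_{\rm max}(P_T)$ supported away from $V(I_{\rm max}(\hat P_T))$, and the block-Hankel matrix $P_T$ for $n\ge 2$ is not $1$-generic, so the Eisenbud machinery behind Theorem~\ref{thm:ideal:minors} does not apply. That diagnosis is accurate and is precisely why the authors left the statement as a conjecture; your closing paragraph is a fair description of where the difficulty lies, not a resolution of it.

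One step even in your easy direction is under-argued. Your ``second fact,'' that $I_{\rm max}(\hat P_T)\not\subseteq \mathcal{I}^n_{d,e,m}$, amounts to the claim that the generic rank of $\hat P_T$ on $\mathcal{T}^n_{d,e,m}$ equals its generic rank $r$ on the ambient $\PP^{\binom{n+m}{n}-1}$. The phrase ``the unique Cramer solution to $P_Tq=0$'' does not establish this: in defective cases the fiber $\psi^{-1}(T)$ is positive-dimensional (this is exactly the content of Theorem~\ref{thm:dimm}), so there is no unique $Q$, and the rank of $\hat P_T$ on the Taylor variety could in principle drop. The paper proves the needed rank equality only in the special case $m=d+1$, in the proof of Theorem~\ref{thm:linearsyzygy}, by exhibiting an explicit point of the Taylor variety at which $\hat P_T$ is generic. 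A similar construction for arbitrary $m$ would be required to complete even the containment $\supseteq$.
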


We close this section with an illustration of (\ref{eq:idealsaturation})
that sets the stage for what is to come.

\begin{example}[$n=2,d=1,e=2,m=4$] \label{ex:2124}
We consider rational functions in two variables $x$ and $y$ that are given as
the ratio of a linear polynomial and a quadratic polynomial:
$$ 
\frac{P(x,y)}{Q(x,y)} \,\, = \,\,
\frac{p_{00} + p_{10} x + p_{01} y}{
q_{00} + q_{10} x + q_{01} y + q_{20} x^2 + q_{11} xy + q_{02} y^2} \, .
$$
Our aim is to characterize the  quartic Taylor polynomials arising from such rational functions:
$$ \begin{matrix} T(x,y) & = & 
c_{00} \,+\, c_{10} x + c_{01} y \,+\, c_{20} x^2 + c_{11} xy + c_{02} y^2 + 
c_{30} x^3 + c_{21} x^2y + c_{12} xy^2\\  &. & \quad  \,+\, c_{03} y^3   \,
+ \,c_{40} x^4 + c_{31} x^3 y + c_{22} x^2 y^2 + c_{13} x y^3 + c_{04} y^4.
\end{matrix}
$$
The passage from $(P,Q)$ to $T$ defines a map $\,\PP^2 \times \PP^5 \dashrightarrow \PP^{14}$
that is birational onto its image, which is  the $7$-dimensional Taylor variety $\mathcal{T}^2_{1,2,4}$ in $\PP^{14}$.
We are interested in its ideal $\mathcal{I}^2_{1,2,4}$. 

The Pad\'e matrix  for this problem has format $12 \times 6$. It equals
\setcounter{MaxMatrixCols}{20}
$$ P_T \,\,=\,\,
\begin{bmatrix}
C_4 & C_3 & C_2 \\
C_3 & C_2 & C_1 \\
C_2 & C_1 & C_0
\end{bmatrix}
\,\, = \,\, \begin{small}
\begin{bmatrix} 
\, c_{40} & c_{31} & c_{22} & c_{13} & c_{04} &\,\, c_{30} & c_{21} & c_{12} & c_{03} & \,\, c_{20} & c_{11} & c_{02} 
\, \smallskip \\ \,
 c_{30} & c_{21} & c_{12} & c_{03} & 0 & \,c_{20} & c_{11} & c_{02} & 0 & \, c_{10} & c_{01} & 0 \, \\  \,
  0 & c_{30} & c_{21} & c_{12} & c_{03} & \, 0 & c_{20} & c_{11} & c_{02} & \, 0 & c_{10}  &  c_{01}
\,  \smallskip \\  \,
c_{20} & c_{11} & c_{02} & 0 & 0 & \,c_{10} & c_{01} & 0 & 0 &\,  c_{00} & 0 & 0 \, \\ \,
 0 & c_{20} &     c_{11} & c_{02} & 0 &\, 0 & c_{10} & c_{01} & 0 &\, 0 & c_{00} & 0 \,\\ \,
 0 & 0 & c_{20} & c_{11} & c_{02} &\, 0 & 0 & c_{10} & c_{01} & \, 0 & 0 & c_{00} \,\,
  \end{bmatrix}^t \!\! . \end{small}
 $$
This matrix has $924$ maximal minors of which $896$ are linearly independent. Thus,
the determinantal  ideal $I_{\rm max}(P_T)$ is generated by $896$ sextics in the $15$ unknowns $c_{ij}$.
This ideal has three associated primes.
One of them is the desired prime ideal $\mathcal{I}^2_{1,2,4}$ of dimension~$7$. 

The other two are nonreduced extraneous components.
The first one has dimension $8$, which exceeds ${\rm dim}(\mathcal{T}^2_{1,2,4}) = 7$. It is the linear subspace
$\PP^8$ defined by $\langle c_{00},c_{10},c_{01},c_{20},c_{11},c_{02}\rangle$.
This arises from the $3 \times 3$ minors in the last three columns of $P_T$.
The other extraneous component comes from the Veronese surface
$\mathcal{T}^2_{0,1,3} \subset \PP^9$ given by
the third Taylor polynomial of $\,1/(q_{00} + q_{10} x + q_{01} y)$.
It is the join of the surface $\mathcal{T}^2_{0,1,3}$ with the $\PP^4$ of 
binary quartics $c_{40} x^4 + c_{31} x^3 y + c_{22} x^2 y^2 + c_{13} x y^3 + c_{04} y^4 $.
This join is a variety of dimension $7$ and degree $9$. 

We can use Corollary \ref{cor:prime} to compute the ideal $\mathcal{I}^2_{1,2,4}$,
but this is a challenging computation.
 We find that $\mathcal{I}^2_{1,2,4}$
has at least $1392$ minimal generators, namely
 seven cubics and respectively $365, 754, 266$ polynomials in degrees $6,7,8$.
 We do not know whether  $\mathcal{I}^2_{1,2,4}$ requires additional generators.
Numerical computations show that
the Taylor variety $\mathcal{T}^2_{1,2,4}$ has degree $ 326$.
\exend
\end{example}

\section{Hypersurfaces}
\label{sec4}

We here study Taylor varieties that
are hypersurfaces in their ambient projective space.
We start out
with a pair of Taylor varieties that are {\em defective},
{\it i.e.}~the inequality \eqref{eq:expecteddimension}
is strict, thus setting the stage for our study of the dimensions
of Taylor varieties in Section~\ref{sec5}.
Thereafter, we turn to the main topic in Section~\ref{sec4}, namely
Hessians of Taylor hypersurfaces.
We show that many of them vanish identically,
thereby contributing to a circle of ideas that has a long history
in algebraic geometry, going back to 
Gordan and Noether in the 19th century.

\smallskip

We begin with  defective instances.
By ``expected'' we will mean that equality holds in~(\ref{eq:expecteddimension}).

\begin{proposition} \label{prop:degenhyper}
For $n=3$, there exists a Taylor variety, namely $\mathcal{T}^3_{2,2,3}$, that is expected to be a hypersurface
but has codimension two, and there also exists a Taylor variety, namely
$\mathcal{T}^3_{8,5,9}$, that is
expected to fill its ambient projective space but turns out to be a hypersurface.
\end{proposition}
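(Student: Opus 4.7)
The plan is to verify each of the two claims by a direct symbolic computation based on the parametrization $\psi : (P,Q)\mapsto T$ of the Taylor variety, using the fact that the dimension of $\mathcal{T}^n_{d,e,m}$ equals the generic rank of the Jacobian $d\psi$. Both defects will turn out to come from the same phenomenon: in each of these two instances, the generic fiber of $\psi$ is one-dimensional (rather than zero-dimensional), so the image drops by exactly one from the number of free parameters. Since the two claims are about specific numerical values of $(n,d,e,m)$, this is the natural route; the conceptual explanation belongs to Section~\ref{sec5} and Section~\ref{sec6}.

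First I would treat $\mathcal{T}^3_{2,2,3}\subset \PP^{19}$. The ambient dimension is $\binom{6}{3}-1=19$ and the parameter count is $2\binom{5}{3}-2=18$, so the expected codimension is one. I would pick a sufficiently generic pair $(P_0,Q_0)$ with $P_0,Q_0\in\CC[x_1,x_2,x_3]_{\leq 2}$, $P_0(0)=Q_0(0)=1$, read off the Taylor polynomial $T_0=\psi(P_0,Q_0)\in\CC[x]_{\leq 3}$, and compute the rank of the Jacobian $d\psi_{(P_0,Q_0)}:\CC^{9}\oplus\CC^9\to\CC^{19}$. The claim to verify is that this rank equals $17$, which, by upper semicontinuity of rank, forces $\dim\mathcal{T}^3_{2,2,3}\leq 17$, i.e.~codimension at least~$2$. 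For the matching lower bound I would exhibit a kernel vector of $d\psi_{(P_0,Q_0)}$ that accounts for the drop, and note that the point $T_0$ is a smooth point of $\mathcal{T}^3_{2,2,3}$, so the image dimension is exactly $18-1=17$. Equivalently, by Corollary~\ref{cor:prime}, I would exhibit two algebraically independent polynomials in $\bigl\langle P_T\cdot q'\bigr\rangle\cap\CC[c]$ at this point, confirming that the codimension is exactly~$2$.

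Next I would treat $\mathcal{T}^3_{8,5,9}\subset\PP^{219}$. The parameter count is $\binom{11}{3}+\binom{8}{3}-2=219$, which matches the ambient dimension, so the expected situation is that $\psi$ is dominant. The same Jacobian strategy applies: compute $d\psi_{(P_0,Q_0)}$ at a generic $(P_0,Q_0)$ and verify that its rank is~$218$ rather than $219$. This proves that $\psi$ is not dominant, hence $\mathcal{T}^3_{8,5,9}$ is a proper subvariety; combined with the upper bound from the parameter count, it is a hypersurface. Equivalently, using Corollary~\ref{cor:prime}, I would find a single nonzero polynomial in $\bigl\langle P_T\cdot q'\bigr\rangle\cap\CC[c]$, which then generates $\mathcal{I}^3_{8,5,9}$.

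The main obstacle is computational scale, especially in the case $(8,5,9)$, where $d\psi$ is of size $219\times 219$ with entries that are polynomials of moderately high degree in the parameters of $(P_0,Q_0)$. A direct symbolic rank computation over $\QQ$ is unwieldy, so in practice I would substitute random integer coefficients for $(P_0,Q_0)$ and compute the rank over a large finite field; this suffices since rank is lower semicontinuous, so a rank of $218$ at one specialization proves that the generic rank is at most $218$, and the matching lower bound can be certified by exhibiting $218$ independent tangent directions in the image. The same method handles the $(2,2,3)$ case much more cheaply and simultaneously produces an explicit kernel direction of $d\psi$, which makes the one-dimensional fiber visible.
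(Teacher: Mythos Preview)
Your plan has the semicontinuity direction reversed, and this is a genuine gap rather than a slip of terminology. Rank is \emph{lower} semicontinuous: evaluating $d\psi$ at a particular $(P_0,Q_0)$ can only make the rank go \emph{down}, so finding rank $17$ (resp.\ $218$) at one point certifies that the generic rank is \emph{at least} $17$ (resp.\ $218$), never at most. Both of your certification steps --- the numerical rank and the ``$218$ independent tangent directions'' --- thus point in the same direction and only prove $\dim\mathcal{T}\geq 17$ (resp.\ $\geq 218$), which is the trivial direction. Defectivity is the assertion that the generic rank is \emph{strictly smaller} than the parameter count, and that requires an upper bound on the generic rank; a single numerical or finite-field evaluation cannot supply it. For $(2,2,3)$ you could still rescue your route by computing the $18\times 18$ minors of $d\psi$ symbolically and checking they all vanish identically (feasible at this size), but for $(8,5,9)$ the determinant of the $219\times 219$ Jacobian is a polynomial of enormous degree in $219$ parameters, and your proposed random specialization does not prove it is identically zero.

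The paper sidesteps this by working with the Pad\'e matrix $P_T$ rather than $d\psi$. Since the entries of $P_T$ are linear in the $c_\gamma$, symbolic kernel computations are tractable. For $(2,2,3)$ the $10\times 10$ matrix $P_T$ is shown to have the explicit kernel vector $[\,0,\,-C_1,\,C_2\,]^t$ valid for \emph{all} $T$; because its first coordinate is zero, a kernel vector with $q_0\neq 0$ forces an additional rank drop, giving the codimension-two statement. For $(8,5,9)$ the $55\times 56$ matrix $P_T$ likewise has an explicit symbolic kernel vector with first $20$ coordinates zero, and the analysis of the common irreducible factor of the nonzero maximal minors identifies the hypersurface (of degree $54$). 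These explicit kernel vectors are exactly the generic upper-bound certificates that your Jacobian approach is missing.
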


\begin{proof}
For the proof we present explicit instances with $n=3$, and we verify the asserted properties.
Our first example has parameters $d=e=2$ and $ m=3$. The Taylor variety
$\mathcal{T}^3_{2,2,3}$ has expected dimension $18$
and it lives in the $\PP^{19}$ of cubic surfaces. Its Pad\'e matrix equals
 \begin{equation}\label{eq:c3223}
  P_T \,\, =\,\,
  \begin{small} \begin{bmatrix}
C_3 \!&\! C_2 \!&\! C_1
\end{bmatrix}   \,\,=\,\,
  \begin{bmatrix}
\,c_{300} & 0 & 0 & c_{200} & 0 & 0 & 0 & 0 & 0 & c_{100} \\
\,c_{210} & 0 & c_{200} & c_{110} & 0 & 0 & 0 & 0 & c_{100} & c_{010} \\
\,c_{201} & c_{200} & 0 & c_{101} & 0 & 0 & 0 & c_{100} & 0 & c_{001} \\
\,c_{120} & 0 & c_{110} & c_{020} & 0 & 0 & c_{100} & 0 & c_{010} & 0 \\
\,c_{111} & c_{110} & c_{101} & c_{011} & 0 & c_{100} & 0 & c_{010} & c_{001} & 0 \\
\,c_{102} & c_{101} & 0 & c_{002} & c_{100} & 0 & 0 & c_{001} & 0 & 0 \\
\,c_{030} & 0 & c_{020} & 0 & 0 & 0 & c_{010} & 0 & 0 & 0 \\
\,c_{021} & c_{020} & c_{011} & 0 & 0 & c_{010} & c_{001} & 0 & 0 & 0 \\
\,c_{012} & c_{011} & c_{002} & 0 & c_{010} & c_{001} & 0 & 0 & 0 & 0 \\
\,c_{003} & c_{002} & 0 & 0 & c_{001} & 0 & 0 & 0 & 0 & 0 
\end{bmatrix}. \end{small}
  \end{equation}
  The matrix is square of format $10 \times 10$, so we expect its
  determinant to define a hypersurface. But the matrix
        has rank $9$, so $f = {\rm det}(P_T)$ is zero.
  Its kernel of $P_T$ is generated by the vector
  \begin{equation}
  \label{eq:rightkernel}
  \begin{bmatrix} 0 & -C_1 & C_2 \end{bmatrix}^t \,\,=\,\,
    \bigl[\,0 \,,\,-c_{001},-c_{010},-c_{100},\,\,c_{002},\,c_{011},\,c_{020},\,c_{101},\,c_{110},\,c_{200}\, \bigr]^t .
   \end{equation}
This kernel is explained by
Theorem \ref{thm:linearsyzygy}.
Note that its first coordinate is zero.
The existence of a kernel vector with nonzero first coordinate
imposes a codimension $2$ constraint on $P_T$.
The variety $\mathcal{T}^3_{2,2,3}$ has dimension $17$ and degree $35$ in $\PP^{19}$.
Its ideal $\mathcal{I}^3_{2,2,3}$ can be computed via Theorem~\ref{IdealPrime}, and it
also verifies Conjecture \ref{conj:IJ}.
Here $ I_{\rm max} ( P_T) $ is the ideal of $9 \times 9$ minors
of the $ 10 \times 10$ matrix $P_T$, which has $81$ minimal generators, and
$I_{\rm max} (\hat P_T)$ is the ideal of $8 \times 8$ minors of the $10 \times 9$ matrix $\hat P_T$,
which has $315$ minimal generators.
The prime ideal $ \mathcal{I}^3_{2,2,3} $ is generated by ten octics.
It is not Cohen-Macaulay, but has Betti sequence $10,10,1$.

\smallskip

We next prove the second assertion by exhibiting an unexpected Taylor hypersurface.
Let $d=8,e=5$ and $m=9$. The two numbers on the right hand side of 
(\ref{eq:expecteddimension})
are both $219$, so  we expect $\mathcal{T}^3_{8,5,9}$ 
to precisely fill $\PP^{219}$. The Pad\'e matrix $P_T $ has $55$ rows and $56$ columns,
and its generic rank is $55$. The kernel of $P_T$ is spanned by a vector whose
whose first $20$ coordinates are zero and whose last $36$ coordinates
are linear in the $c_{ij}$. In particular, the determinant of the $55 \times 55$ matrix $\hat P_T$ is zero.
Among the remaining $55$ maximal minors of $P_T$, the first $19$ are zero
and the other $36$ are non-zero. The latter share a common factor $f$,
which is an irreducible polynomial of degree $54$. Each such maximal minor equals 
$f$ times a linear form, namely the corresponding coordinate in the kernel vector.
In conclusion, the Taylor hypersurface 
$\mathcal{T}^3_{8,5,9}$ has degree $54$ in $\PP^{219}$,
and it is defined by the equation $f=0$.

We obtain a second unexpected hypersurface by swapping
$d$ and $e$. This operation is the duality of reciprocal pairs,
to be explained in  Proposition \ref{prop:reciprocal}.
We now take
$d=5,e=8$ and $m=9$, where the Pad\'e matrix
has $164$ rows and $165$ columns.
Again, the inequality in (\ref{eq:expecteddimension})
is strict. We have ${\rm dim}(\mathcal{T}^3_{5,8,9}) = 218$,
while the expected dimension is $ \,{\rm min}\{219,219\} = 219$.
\end{proof}

\smallskip

For the rest of this section we assume that 
$n,d,e,m$ are parameters for which the matrix $P_T$ is square and
non-singular.
We abbreviate
 $f=\det(P_T)$, we write
$H_f$ for the {\it Hessian matrix} of $f$, and
$h_f = \det(H_f)$ for the {\it Hessian}. 
Already for $n=1$, these
Hessians are special.

\begin{example}[Hankel cubic revisited]
  \label{1124revisited}
Let $n=1, d=1, e=2$ and $m=4$.
Following Example~\ref{ex:isoneless},
the Taylor variety $\mathcal{T}^1_{1,2,4}$ is the cubic threefold in $\PP^4$ 
with defining polynomial
$$  f \,\, = \,\,\det  \begin{small}
  \begin{bmatrix}
    c_4 & c_{3} & c_{2} \\
    c_3 & c_{2} & c_{1} \\
    c_2 & c_{1} & c_{0} \\
  \end{bmatrix}.
\end{small}
$$
The Hessian of $f$ is the quintic $h_f = -8(c_0c_4 - 4 c_1c_3 + 3 c_2^2) f$.
Hence the Hessian vanishes on the hypersurface  $V(f) = \mathcal{T}^1_{1,2,4}$.
By \cite[Proposition 7.2.3]{Ru}, this shows that this cubic
has zero Gaussian curvature,  a noteworthy property in differential geometry.
Note that $\mathcal{T}^1_{d,2,d+3}$ has the same Pad\'e matrix as $\mathcal{T}^1_{1,2,4}$,
for all $d \geq 1$, so it also has zero Gaussian curvature.
\exend
\end{example}

The property in \Cref{1124revisited} is shared by
many Taylor varieties for $n=1$.
We conjecture that $\mathcal{T}^1_{d,e,d+e+1}$ has zero Gaussian
curvature for every $(d,e) \geq (1,2)$, see also \cite[Conjecture~3.3]{CMRS}.
Here we may assume $e \geq d+1$, because
the varieties $\mathcal{T}^1_{d,d+1,2d+2}$ ($e=d+1$) and 
$\mathcal{T}^1_{d+t,d+1,2(d+t)+2}$ have the same Padé matrix up to renaming variables, for every
$t \geq 0$. The case $d=0$ for $n=1$ is called {\it sub-Hankel} in \cite{CRS}:
here, by \cite[Theorem 4.4(iii)]{CRS}, the Hessian $h_f$  is a multiple of a power of $c_0$, so
the Hessian of $\mathcal{T}^1_{0,e,e+1}$
vanishes only at infinity. It is shown in \cite[Theorem~3.1]{CMRS}
that the Hessian of $\mathcal{T}^1_{d,e,d+e+1}$ does not vanish identically.

We now turn to the case $n \geq 2$. Here the situation is quite different:
Taylor hypersurfaces can have vanishing Hessian, {\it i.e.}~$h_f \equiv 0$.
This holds when
 $m=d+1$, as shown~in \Cref{vanishing_hessian}, but it holds in  other cases as well.
The minimal example of a Taylor hypersurface with vanishing Hessian is a cone over
the cubic surface found in 1900 by Perazzo  \cite{P}.

\begin{example}[$n=2,d=1,e=1,m=2$] \label{ex:2425p}
  \label{perazzo}
  The Taylor variety $\mathcal{T}^2_{1,1,2}$ is a cubic hypersurface. The
  Padé matrix  $P_T$ and
  the Hessian matrix of $f=\det(P_T)$ are
  $$
  P_T
 \, = \,
  \begin{bmatrix}
    c_{20} & c_{10} & 0 \\
    c_{11} & c_{01} & c_{10} \\
    c_{02} &  0 & c_{01}
  \end{bmatrix}
  \quad {\rm and} \quad
  H_f \,=\,\begin{small}
  \begin{bmatrix}
\,    \,\,2c_{20} & -c_{11} & 0       & -c_{10} & \,2c_{01}\, \\
    \,-c_{11} & \, 2c_{02} & 2c_{10} & -c_{01} &      0  \\
     \,     0 & \,2c_{10} & 0       & 0       & 0       \\
  \,  -c_{10} & -c_{01} & 0       & 0       & 0       \\
   \,\,\, 2c_{01} &  0      & 0       & 0       & 0       \\
  \end{bmatrix}. \end{small}
  $$
  We see that the rank of $H_f$ is $4$. Hence $h_f$ is the zero polynomial.
  The Taylor variety $\mathcal{T}^2_{1,1,2}$ is a cone over the \emph{Perazzo variety} \cite{P}.
  We note that $\mathcal{T}^2_{1,1,2}$ is the orbit closure of
  the action of  a $5$-dimensional group, consisting of
  ${\rm SL}(2)$ and the affine group $(\CC^2,+)$, which acts via
  $$  \qquad c_{02} \mapsto c_{02} + \alpha c_{01} ,\,
   c_{11} \mapsto   c_{11} + \alpha c_{10} + \beta c_{01},\,
 c_{20} \mapsto c_{20} + \beta c_{10} 
\qquad {\rm for} \,\,\,(\alpha,\beta) \in \CC^2. $$
Such an action exists for a large class of
Taylor hypersurfaces, as shown in \Cref{vanishing_hessian}. \exend
\end{example}

\begin{theorem}
  \label{vanishing_hessian}
  Fix $n,d,e \in \NN$ with $n\geq 2$. Let
$w$ be the vector of coefficients $c_\gamma$ seen in~$P_T$.
Suppose
  $\mathcal{T}^n_{d,e,d+1} = V(f)$,
    where $f = \det P_T$, and set
  $f_\gamma = \frac{\partial f}{\partial c_\gamma}$.
   The image of the~polar map
  $w \mapsto (f_\gamma : c_\gamma \in w)$
  is not dense in $\PP^{|w|-1}$, so the hypersurface
  $\mathcal{T}^n_{d,e,d+1}$ has  vanishing Hessian.
\end{theorem}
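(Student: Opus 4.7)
My plan is to use the structural fact that each top-degree coefficient $c_\alpha$ (with $|\alpha|=d+1$) occurs in $P_T$ in exactly one position, namely $(\alpha,0)$ in the first column: every other column is indexed by some $\beta$ with $|\beta|\geq 1$, so its entries are $c_{\alpha-\beta}$ with $|\alpha-\beta|\leq d$. Writing $w=(u,v)$ with $v=(c_\alpha)_{|\alpha|=d+1}$ of length $\ell=\binom{n+d}{n-1}$ and $u$ the tuple of lower-degree coefficients appearing in the reduced Pad\'e matrix $\hat P_T$ (so $|u|=k=|w|-\ell$), Laplace expansion along the first column gives
\[
f \;=\; \det P_T \;=\; \sum_{|\alpha|=d+1}\!\varepsilon_\alpha\, c_\alpha\, M_\alpha(u) \;=\; v^{\top} M(u),
\]
where $\varepsilon_\alpha\in\{\pm1\}$ and $M_\alpha(u)$ is the signed $(R{-}1)$-minor of $\hat P_T$ obtained by deleting row $\alpha$. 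Thus $f$ has the classical Perazzo shape: it is linear in $v$, with coefficients depending only on $u$.

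The polar map therefore takes the split form
\[
\Psi(u,v) \;=\; \bigl(v^{\top} JM(u),\ M(u)\bigr)\in\CC^k\times\CC^\ell,
\]
in which the second block depends only on $u$. When $\ell>k$ I would stop here: the $\ell$ polynomials $M_\alpha(u)$ in $k$ variables are automatically algebraically dependent, and the resulting polynomial relation in the $\ell$ coordinates $f_{c_\alpha}$ ($|\alpha|=d+1$) confines the polar image to a proper hypersurface of $\PP^{|w|-1}$.

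When $\ell<k$, I would argue by a fibre-dimension count. Setting $r:=\dim\overline{M(\CC^k)}\leq\min(k,\ell)$, the generic fibre of $M$ has dimension $k-r$, and for fixed $u$ the linear system $v^{\top}JM(u)=a$ has solution space of dimension $\ell-\mathrm{rank}\,JM(u)=\ell-r$ generically. Hence a generic fibre of $\Psi$ has dimension at least $k+\ell-2r$, giving $\dim\overline{\Psi(\CC^{|w|})}\leq 2r\leq 2\ell<k+\ell=|w|$.

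I expect the main obstacle to be the borderline case $k=\ell$. I would rule it out by a direct combinatorial inspection of the squareness identity $\binom{n+d}{n-1}=\binom{n+e}{n}$: for $n\geq 2$, the candidates where $k=\ell$ would hold all fall into defective configurations (compare $\mathcal{T}^3_{2,2,3}$ of \Cref{prop:degenhyper}) where $\det P_T\equiv 0$, and these are excluded by the standing hypothesis $\mathcal{T}^n_{d,e,d+1}=V(\det P_T)$. Once $k\neq\ell$ is established, $\dim\overline{\Psi(\CC^{|w|})}<|w|$ and the polar image is not dense in $\PP^{|w|-1}$; by the classical equivalence between non-dominance of $\nabla f$ and $\det H_f\equiv 0$, the hypersurface has vanishing Hessian.
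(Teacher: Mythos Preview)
Your observation that each top-degree coefficient $c_\alpha$ with $|\alpha|=d+1$ appears only in the first column of $P_T$ is correct, and the resulting decomposition $f=v^{\top}M(u)$ is genuine. The case $\ell>k$ is also handled correctly: the $\ell$ cofactors $M_\alpha(u)$ are functions of only $k<\ell$ variables, hence algebraically dependent, and any nontrivial relation among them confines the polar image to a proper subvariety.

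The fibre argument for $\ell<k$, however, has a real gap. You assert that over each $u\in M^{-1}(b_0)$ the linear system $v^{\top}JM(u)=a_0$ has an $(\ell-r)$-dimensional solution space, but consistency of that system requires $a_0$ to lie in the row space of $JM(u)$. That row space is the normal space to the fibre $M^{-1}(b_0)$ at $u$, and it genuinely moves as $u$ varies along the fibre; there is no reason $a_0$ should stay inside it. (Already for the toy map $M(u_1,u_2)=u_1u_2$ with $\ell=1$, $k=2$, the polar map is generically finite and your bound $2\ell=2$ fails.) Concretely, your inequality $\dim\overline{\Psi(\CC^{|w|})}\le 2r\le 2\ell$ is violated in Example~\ref{ex:2425}: there $k=9$, $\ell=6$, but the Hessian has generic rank $13$, so the affine polar image has dimension $13>12=2\ell$. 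No value $r\le\ell$ can rescue the bound.

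Your handling of the borderline case is also confused: the identity $\binom{n+d}{n-1}=\binom{n+e}{n}$ is not the condition $k=\ell$ but simply the squareness of $P_T$, which is already part of the hypothesis; and the instance $\mathcal{T}^3_{2,2,3}$ you invoke actually has $\ell=10$, $k=9$.

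The paper's proof avoids the case split entirely. It exhibits an affine group action on the coefficients $c_\gamma$ (realised as column operations on $P_T$ using the rightmost block $C_{d-e+1}$) under which $\det P_T$ is invariant; differentiating along the action parameters yields the linear relations~\eqref{relations} among the partial derivatives $f_\gamma$. These are packaged into a block-Hankel matrix in the $f_\gamma$ with more rows than columns, whose maximal minors cut out a proper subvariety containing the polar image---uniformly, regardless of the sign of $k-\ell$.
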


\begin{proof}
  Let $T = \sum_{|\gamma| \leq d+1} c_\gamma x^\gamma = T_0+T_1+\cdots+T_{d+1}$.
  The Padé matrix has the form
  $$
  P_T \,\,=\,\, [C_{d+1} \,\,\,\,\, C_d \,\,\,\,\, C_{d-1} \,\, \cdots \,\, C_{d-e+1}],
  $$
  where $C_j : \CC[x]_{d+1-j} \to \CC[x]_{d+1}$ is multiplication by $T_j$.
  The columns of $P_T$ are labeled by the monomials in $Q$, and these are sorted in 
  ascending degree lexicographic term order.
  
  For $j = d-e+2,\ldots,d+1$, let $\lambda^j = (\lambda^j_\alpha)_{\alpha \in M_{d_j,d_j}}$
  be a vector of new variables whose entries are indexed by
  the monomials of degree $d_j=j-(d-e+1)$ in $n$ variables.
  We now replace the first column
  $C^1_j$ of $C_j$ with $C^1_j + C_{d+1-e} \, \left[\begin{smallmatrix} \lambda^j &
      {\bf 0} \end{smallmatrix}\right]^T$, where ${\bf 0}$ is a row vector of zeroes of length
  $\binom{n+e-1}{e}-\binom{n+d_j-1}{d_j}$. This arises from the following affine action on 
  the vector $w$:
  \begin{equation*}
    \begin{array}{rclr}
      c_\gamma & \mapsto & c_\gamma & \text{ for } |\gamma| = d+1-e, \\
      c_\gamma & \mapsto & c_\gamma
      \, +\displaystyle\sum_{\alpha+\beta=\gamma} \lambda_\alpha^{|\gamma|} c_\beta & \text{ for } |\gamma| > d+1-e .
  \end{array}
  \end{equation*}
In this sum we have
  $\alpha \geq 0$ and $|\beta| = d-e+1$.
  The other columns of $C_j$ are modified by this action, for every $j$.
  By the structure of the block $C_j$, this again corresponds to column operations where
  $C^i_j$ is replaced by $C^i_j + C_{d+1-e} \, \left[\begin{smallmatrix} {\bf 0}_1 & \lambda^j & {\bf 0}_2
    \end{smallmatrix}\right]^T$, for zero row vectors ${\bf 0}_1$ and ${\bf 0}_2$.

  Let $f' \in \CC[x,\lambda]$ be the determinant of the matrix obtained by these 
  elementary operations. We have $f'=f$. Taking derivatives with respect to
   $\lambda^j$, where $ j=d-e+2,\ldots,d+1$, 
  one gets that the partial derivatives $f_\gamma$ of $f = {\rm det}(P_T)$
    satisfy the following
  linear equations:
    \begin{equation}
    \label{relations}
  0\, \,= \,\,\frac{\partial f}{\partial \lambda^j_\alpha} \,\,=\,\, \frac{\partial f'}{\partial \lambda^j_\alpha}
\,\,  = \! \sum_{c_\beta \in C_{d-e+1}} \!\!\! c_\beta  \cdot f_{\alpha+\beta} \quad \text{ for }
  d-e+2 \leq j \leq d+1 \text{ and } \alpha \in M_{d_j,d_j}.
  \end{equation}
  For $d-e+2 \leq j \leq d+1$, we define a Hankel matrix $M_j$ as follows: the $(\alpha,\beta)$ entry
  of $M_j$ is
  $f_{\alpha+\beta}$ for $c_\beta \in C_{d-e+1}$ and $\alpha$ such that $|\alpha+\beta| = j$.
  Next let $M = \left[M_{d+1} \, M_{d} \, \cdots \, M_{d-e+2}\right]^T$ be the block-Hankel matrix
  obtained by stacking all matrices $M_j$. By \eqref{relations}, the right kernel of
  $M$ is nonzero, so the columns are linearly dependent.
     Note that $M$ has exactly
  $$ 
  \sum_{j=d-e+2}^{d+1} \binom{n+d_j-1}{d_j} \,\,= \,\,\sum_{j=d-e+2}^{d+1} \binom{n+j-d+e-2}{j-d+e-1}
  \,\,=\,\, \sum_{j=0}^{e-1} \binom{n+j}{n-1}\,\, = \,\,\binom{n+e}{n}-1
  $$
  rows and $\binom{n+d-e+1-1}{d-e+1}  = \binom{n+d-e}{n-1}$ columns.
  Since $\mathcal{T}^n_{d,e,d+1}$ is a hypersurface,  \eqref{eq:expecteddimension}~implies
    $$
  \dim \mathcal{T}^n_{d,e,d+1} \,\,= \,\,\binom{d+1+n}{n}-2 \,\,\leq \,\,\binom{d+n}{n}+\binom{e+n}{n}-2.
  $$
  From this we obtain
  $\binom{e+n}{n} \geq \binom{d+1+n}{n}-\binom{d+n}{n}
  = \binom{d+n}{n-1}$. In particular, we find that $e \geq 1$, and hence
  $\,
  \binom{e+n}{n}-1 \geq \binom{d+n}{n-1}-1 > \binom{d-e+n}{n-1}-1.
  $
  We deduce that the number of rows of the block-Hankel matrix $M$ 
  is greater than or equal to  the number of columns of $M$.
    
  Since $M$ has a nonzero right kernel, its maximal minors must vanish.
  Thus the image of the polar map $w \mapsto (f_\gamma : \gamma \in w)$ lies in the variety
  $V(I_{\rm max}(M)) \subset \PP^{|w|-1}$. Since the Hessian matrix $H_f$ is the Jacobian
  of the polar map, its determinant $h_f$ vanishes identically.
\end{proof}

%We show the action described in the proof of \Cref{vanishing_hessian} for $\mathcal{T}^2_{4,2,5}$
%in \Cref{ex:2425}.

\begin{example}[$n=2,d=4,e=2,m=5$] \label{ex:2425}
This concerns quintic Taylor polynomials of
 bivariate rational functions of the type quartic divided by quadric.
The Taylor variety $\mathcal{T}^2_{4,2,5}$ is the sextic hypersurface in $\PP^{20}$
defined by the determinant $f$ of the Padé matrix
\begin{equation}
  \label{eq2425}
  P_T
  \,\, =\,\,
  \begin{bmatrix}
    C_5 & C_4 & C_3
  \end{bmatrix}
  \,\, = \,\,
  \begin{bmatrix}
    c_{50} & c_{40} & 0      & c_{30} & 0      & 0      \\
    c_{41} & c_{31} & c_{40} & c_{21} & c_{30} & 0      \\
    c_{32} & c_{22} & c_{31} & c_{12} & c_{21} & c_{30} \\
    c_{23} & c_{13} & c_{22} & c_{03} & c_{12} & c_{21} \\
    c_{14} & c_{04} & c_{13} &      0 & c_{03} & c_{12} \\
    c_{05} & 0      & c_{04} &      0 &      0 & c_{03} \\
  \end{bmatrix}
\end{equation}
Only $15$ of the $21$ unknowns $c_{ij}$ appear in $P_T$. The
reduced $15 \times 15$ Hessian matrix $H_f$ has generic corank two, and hence
$h_f \equiv 0$.
To illustrate the proof of \Cref{vanishing_hessian}, we write
$$
f \,\,=\,\, {\rm det}(P_T) \,\,\,\, =\,\,\,\,f' \,\, = \,\,
\det
\begin{bmatrix} \,
  C_5 + C_3
  \left[\begin{smallmatrix}
      \lambda^5_{20} \\
      \lambda^5_{11} \\
      \lambda^5_{02} \\
  \end{smallmatrix}\right] &
  C_4 + C_3
  \left[\begin{smallmatrix}
      \lambda^4_{10} &              0 \\
      \lambda^4_{01} & \lambda^4_{10} \\
                   0 & \lambda^4_{01} \\
  \end{smallmatrix}\right] &
  C_3 \,
\end{bmatrix}.
$$
By taking partical derivatives with respect
to $\lambda^5 = (\lambda^5_{20},\lambda^5_{11},\lambda^5_{02})$ and $\lambda^4
= (\lambda^4_{10},\lambda^4_{01})$, we deduce 
in (\ref{relations})
that the maximal minors of the
following block-Hankel matrix must vanish:
\begin{equation}
  \label{eq:5minorsf}
  M \,:=\,
  \left[     \begin{array}{c}     M_5 \\     \hline     M_4     \end{array}     \right]
\,\,  = \,\,
  \left[
  \begin{array}{cccc}
    f_{50} & f_{41} & f_{32} & f_{23} \\
    f_{41} & f_{32} & f_{23} & f_{14} \\
    f_{32} & f_{23} & f_{14} & f_{05} \\
    \hline 
    f_{40} & f_{31} & f_{22} & f_{13} \\
    f_{31} & f_{22} & f_{13} & f_{04}
  \end{array}   \right].
\end{equation}
Note that the codimension of $V(I_{\rm max}(M))$ in $\PP^{14}$ is equal to the
generic corank of $H_f$.
\exend
\end{example}

We believe that the last statement is always true in the
cases covered by \Cref{vanishing_hessian}. To be precise, we conjecture that
the generic corank of $H_f$ equals $\binom{n+e}{n}-\binom{n+d-e}{n-1}$.
This is the
expected codimension of the ideal of maximal minors of the block-Hankel matrix $M$.

In Proposition \ref{prop:reciprocal} we shall present a general duality statement for
Taylor varieties under swapping $d$ and $e$.
That duality seems to be compatible with vanishing Hessians.

\begin{example}
[$n=2,d=2,e=4,m=5$]
  \label{ex:2245}
Applying duality to \Cref{ex:2425} leads us to study
    quintic Taylor polynomials for
 quadrics divided by quartics.
The Taylor variety $\mathcal{T}^2_{2,4,5}$ is a hypersurface of degree $15$ in $\PP^{20}$.
It is defined by the determinant of the $15 \times 15$ Pad\'e matrix $P_T$.
All $21$ coefficients $c_{ij}$ of the quintic appear in $P_T$, so the
Hessian $H_f$ is a $21 \times 21$ matrix.
This Hessian has generic rank $19$, so its
determinant $h_f$ vanishes to second order.
  \exend
\end{example}

The study of hypersurfaces with vanishing Hessian has a long tradition.
  Hesse conjectured that all such hypersurfaces are cones, but Gordan and Noether 
  found a counterexample.
    Perazzo \cite{P} achieved substantial progress in the case of cubics.
The topic continues to fascinate geometers until the present day.
We refer to Chapter~7 in the prize-winning monograph \cite{Ru}, 
to the recent article \cite{GRS}, and to the references in these sources.
Our study of Taylor polynomials led us naturally
to the novel instances that are reported here. However, it seems to us that
\Cref{vanishing_hessian} is just the tip of an iceberg that remains to be explored.

 \section{Dimension}
  \label{sec5}

The Taylor variety $\mathcal{T}^n_{d,e,m}$ is the closure of the image of the rational map
given by (\ref{eq:Pade}), which~is
\begin{equation}
\label{eq:morphism} \psi \,:\, \PP^{\binom{d+n}{n}-1} \times \PP^{\binom{e+n}{n}-1} \,\,
\dashrightarrow \,\, \PP^{\binom{n+m}{n}-1}, \,\,
(P,Q) \,\mapsto \, T .
\end{equation}
The three spaces in \eqref{eq:morphism} parametrize polynomials of degrees
 $d,e,m$ in $\CC[x] = \CC[x_1,\ldots,x_n]$.
 We shall present a formula for the dimension of $\mathcal{T}^n_{d,e,n}$.
 To this end, we revisit the multiplication map  $\hat \varphi_T$ from
  $\{ Q \in \CC[x]_{\leq e}: Q(0) = 0\}$ to $\CC \{ M_{d+1,m} \}$
  that is defined by~(\ref{eq:varphi1}). The matrix that represents the $\CC$-linear map 
  $\hat \varphi_T$ is the {\em reduced Pad\'e matrix} $\hat P_T$,
  which we obtain from $P_T$ by deleting the first column.
  Thus $\hat P_T$ has $\binom{m+n}{n} - \binom{d+n}{n}$ rows
  and $\binom{e+n}{n}-1$ columns.
  
\begin{theorem} \label{thm:dimm}
The dimension of the Taylor variety $\mathcal{T}^n_{d,e,m}$ equals
$\binom{d+n}{n}-1$ plus the rank of 
the reduced Pad\'e matrix $\hat P_T$ at a generic point of $\mathcal{T}^n_{d,e,m}$,
provided this sum is less than $\binom{n+m}{n}$.
\end{theorem}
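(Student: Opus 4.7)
The plan is to apply the fiber dimension theorem to the parametrization $\psi$ in \eqref{eq:morphism}. It is convenient to work in the affine chart where $T(0)=Q(0)=1$ (which forces $P(0)=1$). In this chart, $\psi$ becomes a morphism from an affine space of dimension $\binom{d+n}{n}+\binom{e+n}{n}-2$, and $\mathcal{T}^n_{d,e,m}$ is the closure of its image, so $\dim\mathcal{T}^n_{d,e,m}$ equals this number minus the dimension of a generic fiber.

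The first step is to identify the fiber $\psi^{-1}(T)$ for a generic $T\in\mathcal{T}^n_{d,e,m}$. A pair $(P,Q)$ in the affine chart maps to $T$ if and only if $P\equiv TQ$ as formal power series modulo monomials of degree $>m$, with $\deg P\leq d$. Equivalently, the coefficients of $TQ$ at all monomials in $M_{d+1,m}$ must vanish (the monomials of degree $>m$ in $TQ$ are unconstrained), which is exactly $\varphi_T(Q)=0$; and then $P$ is recovered uniquely as the degree-$\leq d$ truncation of $TQ$. Hence the fiber is naturally the affine slice $\{Q\in\ker\varphi_T:Q(0)=1\}$, whose dimension is $\dim\ker\varphi_T-1$ as soon as $\ker\varphi_T\not\subseteq\{Q(0)=0\}$.

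The second step relates $\dim\ker\varphi_T$ to $\mathrm{rank}(\hat P_T)$. Since $T$ lies in the image of $\psi$, there exists $Q\in\ker\varphi_T$ with $Q(0)\neq 0$, so the first column of $P_T$ is a $\mathbb{C}$-linear combination of its remaining columns; therefore $\mathrm{rank}(P_T)=\mathrm{rank}(\hat P_T)$ at such $T$, and $\dim\ker\varphi_T=\binom{e+n}{n}-\mathrm{rank}(\hat P_T)$. Combining this with the first step and the fiber dimension theorem yields
\begin{equation*}
\dim \mathcal{T}^n_{d,e,m} \,=\, \left[\tbinom{d+n}{n}+\tbinom{e+n}{n}-2\right] - \left[\tbinom{e+n}{n}-1-\mathrm{rank}(\hat P_T)\right] \,=\, \tbinom{d+n}{n}-1+\mathrm{rank}(\hat P_T),
\end{equation*}
as claimed. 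The hypothesis that this sum lies below $\binom{n+m}{n}$ excludes the degenerate case where $\mathcal{T}^n_{d,e,m}$ fills its ambient projective space (and where the fiber description above would require a separate analysis).

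The main technical obstacle is to ensure that the two genericity conditions hold simultaneously on a nonempty open subset of $\mathcal{T}^n_{d,e,m}$: that $\hat P_T$ attains its maximal rank, and that $T$ lies in the genuine image of $\psi$ (not merely its closure). The first is an open condition by semicontinuity of matrix rank, while the second holds on the Zariski-dense constructible subset given by the definition of $\mathcal{T}^n_{d,e,m}$. Their intersection is therefore nonempty and open in $\mathcal{T}^n_{d,e,m}$, which is exactly what the fiber dimension argument requires.
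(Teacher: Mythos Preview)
Your proof is correct and follows essentially the same route as the paper: apply the fiber dimension theorem to $\psi$ in the affine chart $P(0)=Q(0)=1$, identify the fiber over $T$ with an affine translate of a kernel, and read off the dimension. The only cosmetic difference is that the paper works directly with $\ker\hat\varphi_T$ (observing that $Q-Q'$ has zero constant term), whereas you pass through $\ker\varphi_T$ and then use the relation $\mathrm{rank}(P_T)=\mathrm{rank}(\hat P_T)$ at image points; these are equivalent bookkeeping choices and lead to the same arithmetic.
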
  

\begin{proof}
The dimension of $\mathcal{T}^n_{d,e,m}$  is  $\binom{d+n}{n} + \binom{e+n}{n}-2$
minus the dimension of the generic fiber of the map $\psi$.
Since $\hat P_T$ has $\binom{e+n}{n}-1$ columns, Theorem \ref{thm:dimm} is equivalent
to the claim that this fiber dimension equals the dimension  of $\,{\rm kernel}(\hat P_T)\,$ 
at a generic point of $\mathcal{T}^n_{d,e,m}$.

Fix a generic point $(P,Q)$ in the domain of (\ref{eq:morphism}) and set $T = \psi(P,Q)$.
We work in the affine chart where $P(0)=Q(0) = 1$.
Let $(P',Q')$ be any point in the~fiber $\psi^{-1}(T)$. Since $P'(0) = Q'(0) = 1$,
the constant term of $Q-Q'$ is zero, and the product $T(Q-Q') = P-P'$  contains no monomials in $M_{d+1,m}$.
Therefore, $Q-Q'$ lies in the kernel of $\hat \varphi_T$, so that 
$Q'$ is a point in the affine space $Q + {\rm kernel}(\hat \varphi_T)$.
Conversely, for any $Q'$ in $Q + {\rm kernel}(\hat \varphi_T)$,   there is a unique $P'$
such that $(P',Q') \in (\psi^n_{d,e,n})^{-1}(T)$. Namely, define
$P'$ to be the order $m$ truncation of $Q'T$.
We conclude that the fiber of $\psi$ over $T$ is birationally isomorphic to ${\rm kernel}(\hat \varphi_T)$. 
\end{proof}

Here is one more useful fact, namely the reciprocal duality of Taylor varieties.

\begin{proposition} \label{prop:reciprocal}
The Taylor varieties $\mathcal{T}^n_{d,e,m} $
and $\mathcal{T}^n_{e,d,m}$ are birationally isomorphic.
In particular, they have the same dimension inside their common ambient space $\,\PP^{\binom {n+m}{n}-1}$.
\end{proposition}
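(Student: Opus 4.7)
The plan is to construct an explicit birational map between the two Taylor varieties coming from formal inversion of power series. Work in the affine chart $U = \{c_0 = 1\} \subset \PP^{\binom{n+m}{n}-1}$, which meets both $\mathcal{T}^n_{d,e,m}$ and $\mathcal{T}^n_{e,d,m}$ in dense open subsets, since the parametrization $(P,Q) \mapsto T$ yields $T(0) = P(0)/Q(0) = 1$. For $T \in U$, define $\iota(T) = T'$ to be the unique polynomial of degree $\leq m$ with $T'(0)=1$ satisfying
\[
T \cdot T' \;\equiv\; 1 \pmod{\mathfrak{m}^{m+1}},
\]
where $\mathfrak{m} = (x_1,\ldots,x_n)$. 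The existence and uniqueness of $T'$ follow from the formal identity $T'= \sum_{k=0}^{m}(1-T)^k$ truncated at degree $m$; in particular $\iota$ is a polynomial map $U \to U$.

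Next I would verify that $\iota$ is an involution, hence a birational automorphism of $U$ (and of the ambient projective space). This is immediate from the symmetry of the defining equation: if $T' = \iota(T)$ then $T$ satisfies the defining relation for $\iota(T')$, so $\iota \circ \iota = \mathrm{id}_U$ by uniqueness. The crucial step is to show that $\iota(\mathcal{T}^n_{d,e,m} \cap U) \subseteq \mathcal{T}^n_{e,d,m}$. Let $T$ be a generic point of $\mathcal{T}^n_{d,e,m}\cap U$, so there exist $P$ of degree $\leq d$ and $Q$ of degree $\leq e$ with $P(0)=Q(0)=1$ and $T \equiv P/Q \pmod{\mathfrak{m}^{m+1}}$. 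Consider the power series $Q/P$, which is well-defined because $P(0)=1$, and whose constant term is $1$. Then
\[
T \cdot (Q/P) \;\equiv\; (P/Q)\cdot (Q/P) \;=\; 1 \pmod{\mathfrak{m}^{m+1}},
\]
so by the uniqueness in the definition of $\iota$, the degree-$m$ truncation $T'$ of $Q/P$ equals $\iota(T)$. Hence $\iota(T)$ is the $m$th Taylor polynomial of $Q/P$, which is a rational function of type $(e,d)$, so $\iota(T) \in \mathcal{T}^n_{e,d,m}$.

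By the symmetry of the roles of $(d,e)$ and $(e,d)$, the same argument gives $\iota(\mathcal{T}^n_{e,d,m} \cap U) \subseteq \mathcal{T}^n_{d,e,m}$, and combined with $\iota^2 = \mathrm{id}$ this promotes the inclusions to equalities on dense opens. Thus $\iota$ restricts to a birational isomorphism $\mathcal{T}^n_{d,e,m} \stackrel{\sim}{\dashrightarrow} \mathcal{T}^n_{e,d,m}$, and in particular the two varieties have the same dimension in $\PP^{\binom{n+m}{n}-1}$. There is no real obstacle here; the only point requiring a moment of care is that $T'$ lives in the correct projective ambient space (truncation at degree $m$ on both sides), and that everything stays inside the affine chart $U$ because of the normalization $P(0)=Q(0)=1$, which forces both $T(0)$ and $(Q/P)(0)$ to equal $1$.
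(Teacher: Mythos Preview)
Your proof is correct and follows essentially the same approach as the paper: both use the truncated reciprocal map $T \mapsto 1/T \pmod{\mathfrak m^{m+1}}$ on the affine chart $\{c_0=1\}$, observe it is an involution, and note that it swaps $P/Q$ with $Q/P$. Your version simply spells out more carefully the well-definedness, uniqueness, and the passage from inclusions to equalities that the paper leaves implicit.
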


\begin{proof}
We obtain a rational map
$\,\PP^{\binom {n+m}{n}-1} \dashrightarrow \PP^{\binom {n+m}{n}-1},\,T \mapsto 1/T\,$
by taking reciprocals and truncating at order $m$. This map is a morphism
on the affine chart given by $T(0)=1$.
Clearly, this reciprocal map $T \mapsto 1/T$ is an involution, so it is birational.
Moreover, this involution takes $\mathcal{T}^n_{d,e,m}$ onto
$\mathcal{T}^n_{e,d,m}$, as it sends $P/Q$ to $Q/P$.
This completes the proof.
\end{proof}

Theorem \ref{thm:dimm} tells us that $\mathcal{T}^n_{d,e,m} $ is defective if and only if
the reduced Pad\'e matrix $\hat P_T$ has lower than expected rank, and 
 Proposition \ref{prop:reciprocal} implies that defective Taylor varieties come in
 pairs under swapping $d$ and $e$.
 The specific varieties seen in the proof of Proposition \ref{prop:degenhyper}
serve to illustrate both results.
In what follows, various families of
defective Taylor varieties~will be identified and explained.
This will  lead us to a famous problem
in commutative algebra, known as Fr\"oberg's Conjecture.
We begin by presenting a census of small defective cases.

\begin{table}[h]
\begin{center}
\begin{small}
\begin{tabular}{||c c c||} 
 \hline
 $n$ & $d,e,m$ & ${\rm dimensions}$ \\ [0.3ex] 
 \hline\hline
 3 & 2,2,3 & 17,18,19 \\ \hline
 3 & 3,4,5 & 52,53,55 \\ \hline
3 & 4,3,5 & 52,53,55 \\ \hline
3 & 4,6,7 & 116,117,119 \\ \hline
3 & 6,4,7 & 116,117,119 \\ \hline
3 & 5,8,9 & 218,219,219 \\ \hline
3 & 8,5,9 & 218,219,219 \\  \hline
4 & 2,2,3 & 27,28,34  \\ \hline
4 & 3,3,4 & 63,68,69  \\ \hline
4 & 3,4,5 & 102,103,125  \\ \hline
4 & 4,3,5 & 102,103,125  \\ [0.3ex]
\hline
\end{tabular} \qquad 
\begin{tabular}{||c c c||} 
 \hline
 $n$ & $d,e,m$ & ${\rm dimensions}$ \\ [0.3ex] 
 \hline\hline
4 & 4,4,5 & 122,138,125 \\ \hline
4 & 4,5,6 & 189,194,209 \\ \hline
4 & 5,4,6 & 189,194,209 \\ \hline
4 & 4,6,7 & 277,278,329 \\ \hline
4 & 5,6,7 & 318,334,329 \\ \hline
4 & 6,4,7 & 277,278,329 \\ \hline
4 & 6,5,7 & 318,334,329 \\ \hline
4 & 5,7,8 & 449,454,494 \\ \hline
4 & 6,6,8 & 417,418,494 \\ \hline
4 & 7,5,8 & 449,454,494 \\   \hline
5 & 2,2,3 & 39,40,55     \\ \hline
5 & 3,3,4 & 104,110,125     \\ [0.3ex]
\hline
\end{tabular}
\qquad 
\begin{tabular}{||c c c||} 
 \hline
 $n$ & $d,e,m$ & ${\rm dimensions}$ \\ [0.3ex] 
 \hline\hline
5 & 3,4,5 & 179,180,251     \\ \hline
5 & 4,3,5 & 179,180,251     \\ \hline
5 & 4,4,5 & 228,250,251     \\ \hline
5 & 4,5,6 & 370,376,461     \\ \hline
5 & 5,4,6 & 370,376,461     \\ \hline
5 & 5,5,6 & 441,502,461     \\ \hline
5 & 4,6,7 & 585,586,791     \\ \hline
5 & 5,6,7 & 690,712,791     \\ \hline
5 & 6,4,7 & 585,586,791     \\ \hline
5 & 6,5,7 & 690,712,791     \\ \hline
5 & 6,6,7 & 780,922,791  \\ [0.3ex]
    \hline
\end{tabular}
\end{small}
\caption{\label{tab:defective} Defective Taylor varieties: the third column lists
the dimension and number of parameters of $\mathcal{T}^n_{d,e,n}$,
followed by the dimension $\binom{n+m}{n}-1$ of the ambient projective space.
\vspace{-0.15in}}
\end{center}
\end{table}

\begin{proposition}
Table \ref{tab:defective} lists all defective Taylor varieties for $n \leq 5$ and $m \leq 12 -n$.
\end{proposition}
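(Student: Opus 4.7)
The plan is to reduce the proposition to a finite exhaustive computation and to carry it out by the recipe of Theorem \ref{thm:dimm}. For each tuple $(n,d,e,m)$ with $1 \leq n \leq 5$ and $m \leq 12-n$ (and the trivial restriction $0 \leq d,e \leq m$ — otherwise $P$ or $Q$ is zero up to the relevant degree), we must determine whether the actual dimension of $\mathcal{T}^n_{d,e,m}$ equals the expected dimension in (\ref{eq:expecteddimension}). Using Proposition \ref{prop:reciprocal}, we may restrict attention to the cases $d \leq e$ and obtain the remainder by swapping, which cuts the enumeration roughly in half and is the reason Table \ref{tab:defective} exhibits its visible $d \leftrightarrow e$ symmetry. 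The case $n=1$ has already been settled (no defects) by \Cref{cor:univ}, so only $2 \leq n \leq 5$ need to be inspected.

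For each remaining tuple, the dimension computation proceeds as follows. Choose a random rational (or finite-field) point $(P,Q)$ with $P(0)=Q(0)=1$, invert $Q$ modulo the ideal $(x_1,\ldots,x_n)^{m+1}$ by the standard Newton iteration, and set $T$ equal to the truncated product $P\cdot Q^{-1}$, so that $T = \psi(P,Q)$ is a random point of $\mathcal{T}^n_{d,e,m}$. Assemble the reduced Padé matrix $\hat P_T$ from the block Hankel recipe of Section \ref{sec3} and compute its rank $r$ by exact Gaussian elimination. Theorem \ref{thm:dimm} then gives
\begin{equation*}
\dim \mathcal{T}^n_{d,e,m} \,\, = \,\, \min\!\Bigl\{\tbinom{d+n}{n}-1 + r,\;\;\tbinom{n+m}{n}-1\Bigr\}.
\end{equation*}
Compare this with (\ref{eq:expecteddimension}); the tuple is defective precisely when the value falls strictly short of the expected dimension, and in that event must appear in Table \ref{tab:defective}. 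Conversely, for each tuple not listed in the table, the computation must return equality.

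The only subtlety is to justify that the evaluation point is generic. By upper semicontinuity of matrix rank under specialization, the rank of $\hat P_T$ at any specific $T = \psi(P,Q)$ is a lower bound for the generic rank over $\mathcal{T}^n_{d,e,m}$. Hence if the observed rank already matches the expected one, non-defectiveness is certified unconditionally. For the tuples where a deficit is observed, we rerun the computation with several independent random choices of $(P,Q)$: since the locus where the rank drops further is a proper Zariski-closed subset of the source of $\psi$, agreement of the rank across multiple random samples confirms the generic value (a Schwartz–Zippel argument bounds the failure probability, and to remove all doubt one may repeat the same computation over several large primes $p$ and then over $\mathbb{Q}$). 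The main practical obstacle is therefore not mathematical but computational: for the largest tuples in the table (such as $n=5$, $d=e=6$, $m=7$, where $\hat P_T$ has hundreds of rows and columns) one must use modular arithmetic to keep Gaussian elimination tractable. Once all these finite rank computations have been executed and tabulated, the output is exactly the content of Table \ref{tab:defective}.
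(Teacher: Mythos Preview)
Your proposal is correct and follows essentially the same approach as the paper: an exhaustive computation over the finite range of parameters, with the dimension extracted from a rank calculation at a generic point. The only minor difference is that the paper computes the rank of the Jacobian of the parametrization \eqref{eq:morphism} as its primary tool and then uses Theorem~\ref{thm:dimm} (the reduced Pad\'e matrix) as an independent cross-check, whereas you go directly to Theorem~\ref{thm:dimm}; your write-up is also more explicit about the probabilistic certification of genericity and the use of modular arithmetic, points the paper leaves implicit.
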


\begin{proof}
The $34 = 7 + 14+ 13 $ defective Taylor varieties for $n=3,4,5$ 
that are seen in Table~\ref{tab:defective} were found by exhaustive computation.
In each case, the dimension was computed as the rank of the Jacobian
of the parametrization (\ref{eq:morphism}), and it was verified using Theorem~\ref{thm:dimm}.
In particular, we found experimentally that no defective Taylor varieties exist for $n=2$.
\end{proof}

Table~\ref{tab:defective}
suggests that defectivity persists as the dimension grows:~if $\mathcal{T}^n_{d,e,m}$ is defective then so is $\mathcal{T}^{n+1}_{d,e,m}$. We also see
  five cases of Taylor varieties where the number of
 parameters exceeds the ambient dimension.
The smallest example is in the first row of the middle box.
% of Table~\ref{tab:defective}.

\begin{example}[$n=d=e=4,m=5$]
The Pad\'e matrix $P_T =
\begin{bmatrix} C_5 \!&\! C_4 \!&\! C_3 \!&\! C_2 \!&\! C_1 \end{bmatrix}
$ has $56$ rows and $70$ columns. Its rank is $54$, so the defect is $16$.
Subtracting $16$ from the expected dimension $138$, we obtain
$\,{\rm dim}(\mathcal{T}^4_{4,4,5}) = 122$. This equals
$\binom{8}{4} - 1 + 53$, as in Theorem \ref{thm:dimm}.
Thus, in the space  $\PP^{125}$ of  quartic fourfolds,
our Taylor variety $\mathcal{T}^4_{4,4,5}$
has codimension $3$.
\exend
\end{example}

Based on our computational experiments, we now formulate
a general conjecture.

\begin{conjecture} \label{conj:finite}
The following holds concerning the defectivity of Taylor varieties:
 \item[(1)]  For $n=2$, all Taylor varieties $\mathcal{T}^2_{d,e,m}$ are non-defective.
\item[(2)]  For $n=3$, there are only seven defective Taylor varieties, namely
 those listed in Table~\ref{tab:defective}.
\item[(3)] For $n\ge 3$ fixed,  there are only finitely many triples $(d,e,m)$
such that $\mathcal{T}^n_{d,e,m}$ is defective.  
\end{conjecture}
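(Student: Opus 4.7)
The plan is to exploit the dimension formula of Theorem \ref{thm:dimm}, which says that $\mathcal{T}^n_{d,e,m}$ is non-defective precisely when the reduced Pad\'e matrix $\hat P_T$ has the expected rank at a generic $T \in \mathcal{T}^n_{d,e,m}$. Since a generic such $T$ is the order-$m$ truncation of $P/Q$ for a generic pair $(P,Q)$ with $P(0)=Q(0)=1$, the kernel of $\hat\varphi_T$ consists of polynomials $Q'$ with $Q'(0)=0$ and $\deg Q' \leq e$ for which there exists $R$ with $\deg R \leq d$ and $PQ' \equiv QR \pmod{\mathfrak{m}^{m+1}}$, where $\mathfrak{m}=(x_1,\ldots,x_n)$. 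Thus the first step is to recast defectivity as a counting problem for such truncated syzygies of the generic pair $(P,Q)$.

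The second step is to translate this into a Hilbert function statement. The pairs $(R,Q')$ above are precisely the degree-$\leq m$ part of the syzygies of $(P,Q)$ modulo the trivial Koszul relation, and for generic $(P,Q)$ the graded dimensions of this module coincide, degree by degree, with the Hilbert function of the quotient $\CC[x]/J$ for an ideal $J$ generated by two generic forms of degrees $d$ and $e$ (after passing to the bihomogenization outlined in Section~\ref{sec6}). Non-defectivity of $\mathcal{T}^n_{d,e,m}$ is therefore equivalent to the assertion that the Hilbert function of such a generic ideal, summed through degree $m$, attains its expected value determined by the Koszul-complex bound. This is precisely an instance of Fr\"oberg's Conjecture, which predicts that the Hilbert series of a quotient by generic forms equals the positive part of its naive formal series.

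For part~(1), $n=2$, Fr\"oberg's Conjecture is a theorem, so the expected Hilbert function is achieved and every Taylor variety is non-defective; this recovers the empirical observation from Table~\ref{tab:defective}. For part~(2), $n=3$, one combines the known partial results on Fr\"oberg for three variables (notably the case of ideals with few generators) with a direct verification in the finite window where the generic Hilbert function is not yet determined, and the reciprocal duality of Proposition~\ref{prop:reciprocal} cuts the casework in half. For part~(3), the finiteness claim reduces to showing that the discrepancy between the true Hilbert function and its expected value can only be positive for $(d,e,m)$ in a bounded region: once $m$ is large relative to $d+e$ (or $d+e$ is large relative to $m$), elementary binomial estimates force the generic Hilbert function to meet its expected bound, leaving only finitely many residual triples to control.

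The main obstacle is that Fr\"oberg's Conjecture is wide open for $n \geq 4$, so parts~(2) and~(3) cannot be reduced to a blackbox application. What one must extract is the minimal numerical input needed: not the full Hilbert series of a generic ideal in all degrees, but only its behavior in the finitely many degrees relevant to the Pad\'e truncation. Pinning down this weaker Fr\"oberg-type input, and verifying it either unconditionally or in the precise regime dictated by the inequality \eqref{eq:expecteddimension}, is the genuine difficulty. This is exactly the program carried out in Section~\ref{sec6}, where Theorem~\ref{thm:FRob} and Corollary~\ref{cor(1)(2)} use this approach to establish parts of Conjecture~\ref{conj:finite} in specific cases.
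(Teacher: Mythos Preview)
This is a conjecture; the paper does not prove it. What the paper does prove is restricted to the slice $m=d+1$: Theorem~\ref{thm:linearsyzygy} translates defectivity in that slice into a Hilbert-function statement, Theorem~\ref{thm:FRob} shows that Fr\"oberg implies part~(3) there, and Corollary~\ref{cor(1)(2)} settles parts~(1) and~(2) there. Your final paragraph essentially acknowledges this, so your text reads more as a strategic outline than as a proof.

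The substantive gap is in your second step. You claim that the kernel of $\hat\varphi_T$ is governed by the Hilbert function of $\CC[x]/J$ with $J$ generated by \emph{two} generic forms of degrees $d$ and $e$, invoking a ``bihomogenization outlined in Section~\ref{sec6}''. No such bihomogenization appears there. For $m=d+1$ the paper's translation (Theorem~\ref{thm:linearsyzygy}) is to an ideal generated by $e$ generic forms of degrees $d,d-1,\dots,d-e+1$, coming from the homogeneous components $T_d,\dots,T_{d-e+1}$ of $T$; this is not the ideal $\langle P,Q\rangle$ and has a different Hilbert series. Your syzygy description $PQ'\equiv QR \pmod{\mathfrak m^{m+1}}$ is correct, but $P$ and $Q$ are inhomogeneous with constant term~$1$, so passing from truncated syzygies of $(P,Q)$ to the Hilbert function of a graded quotient by two homogeneous forms is exactly the step that needs an argument and does not follow from anything in the paper. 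For general $m$ the paper indicates (end of Section~\ref{sec6}) that the correct analogue is a \emph{module} version via the Hankel matrix $H_T$, explicitly deferred to a follow-up article.

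Consequently your claimed proof of part~(1) does not go through: even granting Fr\"oberg for $n=2$, you have only linked defectivity to Fr\"oberg in the case $m=d+1$, which is precisely Corollary~\ref{cor(1)(2)}, not the full statement. Likewise the finiteness argument in part~(3) presupposes a reduction valid for all $m$ that has not been supplied.
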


\section{Fr\"oberg's Conjecture}
\label{sec6}

We now turn to the promised relationship with commutative algebra. This will lend
support to Conjecture~\ref{conj:finite}. We examine the case $m=d+1$, which also appeared
in equation~(\ref{eq:rightkernel}).

\begin{theorem}\label{thm:linearsyzygy}
Consider the ideal generated by $e$ general forms
of degrees $d, d-1, \ldots ,d-e+1$ in $n$ variables.
The value of its Hilbert function in degree $d+1$ is the codimension of  $\,\mathcal{T}^{n}_{d,e,d+1}$.
\end{theorem}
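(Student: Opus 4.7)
The plan is to apply Theorem \ref{thm:dimm} with $m=d+1$ and to identify the cokernel of $\hat\varphi_T$ with a graded piece of an ideal generated by the homogeneous components of a generic Taylor polynomial. For $m=d+1$, the set $M_{d+1,d+1}$ consists of all monomials of degree exactly $d+1$, so the target of $\hat\varphi_T$ is the homogeneous piece $\CC[x]_{d+1}$, of dimension $\binom{n+d+1}{n}-\binom{n+d}{n}=\binom{n+d}{n-1}$. Combined with Theorem \ref{thm:dimm}, this yields
\begin{equation*}
{\rm codim}\,\mathcal{T}^n_{d,e,d+1} \,=\, \dim \CC[x]_{d+1}\,-\,{\rm rank}\,\hat\varphi_T \,=\, \dim{\rm coker}\,\hat\varphi_T
\end{equation*}
at a generic point $T$ of the Taylor variety.

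Next, I would identify ${\rm Image}(\hat\varphi_T)$ with the degree-$(d+1)$ part of a concrete ideal. Decomposing $T=1+T_1+\cdots+T_{d+1}$ and $Q=Q_1+\cdots+Q_e$ (with $Q(0)=0$) into homogeneous components, the degree-$(d+1)$ component of $QT$ equals $\sum_{i=1}^{e}T_{d+1-i}Q_i$, with the convention $T_j=0$ for $j<0$. Therefore
\begin{equation*}
{\rm Image}(\hat\varphi_T)\,=\,T_d\cdot\CC[x]_1\,+\,T_{d-1}\cdot\CC[x]_2\,+\,\cdots\,+\,T_{d+1-e}\cdot\CC[x]_e\,=\,I_{d+1},
\end{equation*}
where $I=(T_d,T_{d-1},\ldots,T_{d+1-e})\subset \CC[x]$. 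Consequently $\dim{\rm coker}\,\hat\varphi_T$ equals the Hilbert function of $\CC[x]/I$ evaluated at $d+1$. (In the edge case $e=d+1$, the last generator is the constant $T_0=1$, so $I$ is the unit ideal and the Hilbert function vanishes, matching the fact that $\mathcal{T}^n_{d,e,d+1}$ fills its ambient space.)

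The last step is to check that for a generic $T\in\mathcal{T}^n_{d,e,d+1}$ the tuple $(T_d,T_{d-1},\ldots,T_{d+1-e})$ is a generic choice of forms of the prescribed degrees. From $P=QT\bmod x^{d+2}$ one reads the recursion $P_k=T_k+\sum_{i=1}^{k}Q_iT_{k-i}$ for $1\le k\le d$, showing that for any fixed $Q$ and any prescribed $(T_1,\ldots,T_d)$ one can recursively solve for $(P_1,\ldots,P_d)$. Hence the projection $\pi:\mathcal{T}^n_{d,e,d+1}\to\bigoplus_{j=d+1-e}^{d}\CC[x]_j$, $T\mapsto(T_{d+1-e},\ldots,T_d)$, is dominant. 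Since the locus of tuples for which the Hilbert function of the associated quotient attains its minimum value is Zariski open, a generic $T$ yields a generic ideal, completing the argument. The main delicate point is precisely this last step: the dominance of $\pi$, made explicit by the recursion, is what guarantees that we land in the open stratum required by the statement.
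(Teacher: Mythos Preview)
Your proof is correct and follows essentially the same route as the paper: apply Theorem~\ref{thm:dimm} with $m=d+1$, identify the image of $\hat\varphi_T$ with $I_{d+1}$ for $I=(T_d,\ldots,T_{d-e+1})$, and then argue that the relevant homogeneous components of a generic point of $\mathcal{T}^n_{d,e,d+1}$ are themselves generic. The only difference is in this last step: the paper exhibits an explicit point via the identity that $(T_eT_{d-e+1}+T_d+\cdots+T_1+1)(1-T_e)$ has no degree-$(d{+}1)$ part, whereas you obtain the same conclusion by showing directly that the projection $T\mapsto(T_{d+1-e},\ldots,T_d)$ is dominant (for any choice of $Q$ with $Q(0)=1$, the formulas $P_k=(QT)_k$ for $k\le d$ produce a preimage). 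Both arguments are equivalent in strength; yours avoids the need for a clever algebraic identity.
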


\begin{proof} The ideal we are referring to is generated by the homogeneous components of $T$:
$$ I \,:= \, \langle \,T_d\,, \,T_{d-1}\,,\, T_{d-2}, \,\ldots,\, T_{d-e+1}\, \rangle \,\,\,\,
\subset \,\,\,\, \CC[x] \,=\,\CC[x_1,\ldots,x_n]. $$
We claim that $\binom{n+m}{n}-1 - {\rm dim}(\mathcal{T}^n_{d,e,d+1} )$ equals
 the Hilbert function value  $\,{\rm dim}_\CC \bigl( \CC[x]_{d+1}/I_{d+1} \bigr) $.

 In the case $m=d+1$, the Pad\'e matrix consists of 
only one row of blocks, namely it~equals
$$ \hat P_T \,\,=\,\, \begin{bmatrix}
C_{d} & C_{d-1} & \cdots &C_{d-e+3} & C_{d-e+2} & C_{d-e+1}
\end{bmatrix}. $$
The entries of $\hat P_T$ are the coefficients of general forms $T_d,T_{d-1},\ldots,T_{d-e+1}$.
We claim that the rank of this generic $\hat P_T$  equals the
rank of $\hat P_T$  at a generic point of $\mathcal{T}^n_{d,e,m}$.
This holds because the polynomial
$(T_{e}T _{d-e+1}+T_{d} +T_{d-1} + \cdots +T_2+T_1+1)(1-T_e)$
has no terms in degree $d+1$, which means that the left factor 
defines a point of $\mathcal{T}^n_{d,e,d+1}$, even for general $T$.

Combining the claim with Theorem \ref{thm:dimm},
we conclude that the dimension of $\mathcal{T}^n_{d,e,d+1}$
equals $\binom{d+n}{n}-1 + {\rm rank}(\hat P_T)$ for general $T$.
Now, any vector in the image of $\hat P_T$ corresponds to a homogeneous
polynomial $T_{d+1}$ that is in the ideal $I$.
In symbols, ${\rm rank}(\hat P_T) = {\rm dim}_\CC(I_{d+1}) $.
Hence the codimension of the Taylor variety $\mathcal{T}^n_{d,e,d+1}$
in its ambient space $\PP^{\binom{n+d+1}{n}-1}$ is equal~to
$$ 
{{n{+}d{+}1}\choose n}-1-{{d{+}n}\choose n}+1-\mathrm{rank}(\hat P_T)\,
= \,{{d{+}n}\choose{n{-}1}}-\mathrm{rank}(\hat P_T)
\,= \,{\rm dim}_\CC(\CC[x]_{d+1} ) - {\rm dim}_\CC (I_{d+1}). $$
The right hand side is the value of the Hilbert function in the assertion.
\end{proof}

The Hilbert function of an ideal of generic forms is a prominent thread
in commutative algebra, initiated by
Fr\"oberg's article \cite{Fr}. In our case, the Hilbert series is believed to be
\begin{equation}
\label{eq:froeberg_conjecture}
\left[ \frac{\prod_{i=1}^e(1-t^{d+1-i})}{(1-t)^n}  \right]_{>0}.
\end{equation}
The operator $[\dots ]_{>0}$ applied to a power series $\sum_{i\geq 0} a_i t^i$ returns $\sum_{i\geq 0} b_i t^i$  with $b_i=a_i$ if $a_j>0$ for all $j\leq i$ and $b_i=0$ otherwise. 
We are interested in the coefficient $t^{d+1}$ in~(\ref{eq:froeberg_conjecture}).
{\em Fr\"oberg's Conjecture} states that this coefficient equals
the Hilbert function value in Theorem~\ref{thm:linearsyzygy}.

The conjecture has been proved for several special cases.  
For $n=2$ it is due to Fr\"oberg \cite[Section 3, page 129]{Fr}.
 A simpler proof by Valla    \cite{Valla} rests on generic initial ideals.  For $n=3$ it was proved by  Anick \cite{A}. 
 Pardue   \cite{Pardue} showed that the conjecture is implied by the
 Moreno-Socias  Conjecture about the revlex generic  initial ideal of generic polynomials. 
 We refer to  recent papers by   Nenashev \cite{Ne} 
 and Nicklasson \cite{Ni}  for the state of the art  and many references.
 These known results allow for the derivation of special cases of Conjecture~\ref{conj:finite}.

\begin{theorem}\label{thm:FRob} 
Fr\"oberg's conjecture  implies  part (3) of Conjecture \ref{conj:finite}   when $m=d+1$.  
\end{theorem}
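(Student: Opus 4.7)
By Theorem \ref{thm:linearsyzygy}, the codimension of $\mathcal{T}^n_{d,e,d+1}$ equals $\dim_\CC(R/I)_{d+1}$, where $R = \CC[x_1,\ldots,x_n]$ and $I$ is generated by generic forms of degrees $d, d-1, \ldots, d-e+1$. Defectivity is then equivalent to $\dim_\CC(R/I)_{d+1}$ strictly exceeding the expected codimension
\[
\mathrm{ec}(n,d,e) \,\,:=\,\, \max\Bigl\{0,\,\binom{n+d}{n-1}-\binom{n+e}{n}+1\Bigr\}
\]
obtained from parameter counting. The plan is to use Fr\"oberg's conjecture to identify $\dim_\CC(R/I)_{d+1}$ with the coefficient $F_{n,d,e}$ of $t^{d+1}$ in the truncated series $\bigl[\prod_{i=1}^e (1-t^{d+1-i})/(1-t)^n\bigr]_{>0}$, reducing everything to showing that $F_{n,d,e}=\mathrm{ec}(n,d,e)$ for all but finitely many $(d,e)$.

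I would then split by whether the parameter count or the ambient dimension is binding. In \emph{Case A}, $\binom{n+e}{n}-1 \leq \binom{n+d}{n-1}$, a direct inclusion-exclusion expansion of $\prod(1-t^{d+1-i})$ shows that the subsets $S \subseteq \{1,\ldots,e\}$ with $|S|\leq 1$ contribute exactly $\binom{n+d}{n-1}-\binom{n+e}{n}+1 = \mathrm{ec}(n,d,e)$ to the alternating sum $A(k) := [t^k]\prod(1-t^{d+1-i})/(1-t)^n$ at $k=d+1$, and the first nonzero $|S|=2$ term appears only when $2e \geq d+2$. The same threshold rules out $|S|\geq 2$ contributions at all earlier $k \leq d+1$, and a hockey-stick estimate confirms $A(k)>0$ for $k\leq d$, so no spurious truncation occurs. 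Hence Case A is non-defective whenever $2e \leq d+1$. In \emph{Case B}, $\binom{n+e}{n}-1 > \binom{n+d}{n-1}$, one has $\mathrm{ec}=0$, and non-defectivity becomes the statement that the truncation fires by degree $d+1$, i.e.\ there exists $k_0 \leq d+1$ with $A(k_0) \leq 0$.

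The finiteness assertion then follows from an asymptotic comparison. For $n \geq 3$ fixed, the Case A hypothesis $\binom{n+e}{n} \leq \binom{n+d}{n-1}+1$ asymptotically forces $e = O(d^{(n-1)/n}) = o(d)$, so beyond an explicit bound $d_0(n)$ we automatically have $2e \leq d+1$, eliminating Case A defectivity. For Case B with $2e \leq d+1$, the same inclusion-exclusion gives $A(d+1) = \binom{n+d}{n-1}-\binom{n+e}{n}+1 \leq 0$ immediately, forcing $F_{n,d,e}=0$. The remaining Case B sub-range, $2e \geq d+2$, is the technical heart; here I would follow Krattenthaler's approach of using the factorization
\[
\prod_{i=1}^e \bigl(1-t^{d+1-i}\bigr) \,\,=\,\, (1-t)^e \prod_{i=1}^e \bigl(1 + t + \cdots + t^{d-i}\bigr)
\]
together with identities for the resulting convolutions to locate, for all but finitely many such $(d,e)$, a degree $k_0 \leq d+1$ at which $A(k_0) \leq 0$. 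Combined with the cofinitely many $d \leq d_0$ handled by finite computation, this yields the claim. The main obstacle is precisely this last step: in the sub-range $2e \geq d+2$ the alternating sum receives contributions of both signs from subsets of all sizes, so a naive term-by-term sign estimate fails and Krattenthaler's identity-driven argument controlling the first sign change of $A(k)$ is essential.
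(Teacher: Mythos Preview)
Your setup and the analysis for $2e\le d+1$ are correct and essentially match the paper's step~(4): only the $|S|\le 1$ terms survive the inclusion--exclusion, giving $A(d+1)=\binom{n+d}{n-1}-\binom{n+e}{n}+1$, and your asymptotic bound on $e$ in Case~A is exactly what underlies the paper's endgame. The gap is entirely in the Case~B sub-range $2e\ge d+2$, which you call ``the technical heart'' and propose to handle via the factorization $(1-t)^e\prod_i(1+t+\cdots+t^{d-i})$ together with unspecified convolution identities, attributing this to Krattenthaler. That is \emph{not} what Krattenthaler's argument (as recorded in the paper) does, and you do not actually carry the factorization route through; as written, this case is simply missing.

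The idea you are missing is a one-line monotonicity in $e$: if $F_{n,d,e}=0$ then $F_{n,d,e+1}=0$. Indeed, let $c\le d+1$ be minimal with $A(c)\le 0$; since the generating series for parameter $e{+}1$ equals $(1-t^{\,d-e})$ times that for parameter $e$, its coefficient of $t^c$ is $A(c)$ when $c<d-e$ and $A(c)-A(c{-}d{+}e)$ otherwise, in either case $\le 0$, so the truncation still fires by degree $c$. Combined with the trivial implication $\mathrm{ec}(n,d,e)=0\Rightarrow\mathrm{ec}(n,d,e{+}1)=0$, once a single $e_1$ satisfies $F_{n,d,e_1}=\mathrm{ec}(n,d,e_1)=0$, the same holds for every $e\ge e_1$. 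Your own asymptotic already supplies such an $e_1$ with $2e_1\le d+1$ for all large $d$ (take $e_1$ just past the Case~A/B boundary, or simply $e_1=\lfloor d/2\rfloor$ as the paper does and check that $W(2p,p)$ and $W(2p{-}1,p)$ are eventually negative). Monotonicity then absorbs the whole range $2e\ge d+2$ for free---no direct sign-change analysis of $A(k)$ in that regime is needed at all.
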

 
 The proof to be presented is based on suggestions of  Christian Krattenthaler. 
 We warmly thank  him for allowing us to include his nice combinatorial arguments
 in this paper.

\begin{proof}  Fix $n \geq 2$. For any $1\leq e\leq d$  we abbreviate
  $W(d,e)=\binom{d+n}{n-1}-\binom{e+n}{n}+1$ and  
$F(d,e)=(1-t)^{-n} \prod_{i=1}^e (1-t^{d+1-i} )$.  Assuming 
the correctness of Fr\"oberg's Conjecture, 
by  Theorem \ref{thm:linearsyzygy}, the  codimension of the Taylor variety  $\mathcal{T}^n_{d,e,d+1}$ is 
$$ \alpha(d,e)\,\,:=\, \mbox{ coeff. of } t^{d+1} \mbox{ in   } [ F(d,e) ]_{>0}.$$
On the other hand, the expected codimension of $\mathcal{T}^n_{d,e,d+1}$ equals
$$\beta(d,e)\,\,:=\,\,\max(0,  W(d,e)).$$ 
In particular, the following inequality holds for all 
values of $d$ and $e$:
\begin{equation}
\label{eq:ineq}
 \beta(d,e) \,\,\leq \,\, \alpha(d,e).
 \end{equation} 

We claim that $\alpha(d,e)=\beta(d,e)$ holds, with only finitely many exceptional
pairs $(d,e)$. Recall that $n$ is fixed.
We shall now prove this claim. Our argument proceeds in five steps:
\begin{enumerate}
\item[(1)] If   $\beta(d,e)=0$ then  $\beta(d,e+1)=0$. \vspace{-0.1cm}
\item[(2)] If    $\alpha(d,e)=0$ then  $\alpha(d,e+1)=0$.  \vspace{-0.1cm}
\item[(3)] If  $\beta(d,e)= \alpha(d,e)=0$  then $\beta(d,e_1)= \alpha(d,e_1)=0$  for all $e_1\geq e$.   \vspace{-0.1cm}
\item[(4)] If  $e<(d/2)+1$ then $\beta(d,e)= \alpha(d,e)$.  \vspace{-0.1cm}
\item[(5)] There exists $d_0$ such that $\alpha(d,e)=\beta(d,e)$ for every $d\geq d_0$ and for every $e\leq d$. 
\end{enumerate} 
 Assertion  (1) is obvious since $W(d,e)$ is a decreasing function in $e$. For (2), by assumption there is a $c\leq d+1$  such that  the coefficient  of $t^c$ in $F(d,e)$ is $\leq 0$. Let $c$  be the smallest integer with that property. Then 
 $\,F(d,e)=u_0+u_1t+\cdots+u_ct^c+\ldots\,$   with $u_c\leq 0$ and $u_i>0$ for $i=0,1,\dots,c-1$.  Since $F(d,e+1)=F(d,e) (1-t^{d+1-(e+1)})$,  the coefficient  of $t^c$ in  $F(d,e+1)$ is  $ u_c$  if $d-e>c$  or 
$ u_c-u_k$   if $d+1-(e+1)\leq c$  and $k=c-d+e$.  In both cases this value is  $\leq 0$ and so  $\alpha(d,e+1)=0$. 
Now  (3) follows immediately from (1) and (2). 
 
To prove (4), we first observe that the assumption $e<(d/2)+1$ implies that 
$$\prod_{i=1}^e (1-t^{d+1-i} )\,\,=
\,\,1-\sum_{i=1}^e t^{d+1-i} +  \dots  \mbox{ terms of degree } >d+1$$ 
Therefore, up to degree $d+1$, our series $F(d,e)$ coincides with the series
$$\left( \sum_{k\geq 0} \binom{n-1+k}{n-1} t^k\right)\left(1-\sum_{i=1}^e t^{d+1-i} \right).$$ 
It follows that the coefficient  of $\,t^{d+1} \,$ in  $\,F(d,e)\,$ equals
\begin{equation}
\label{eq:bigbinomials}
 \binom{n+d}{n-1}-\sum_{i=1}^e \binom{n-1+i}{n-1}\,\,=\,\,
 %   \binom{n+d}{n-1}-\sum_{i=0}^e \binom{n-1+i}{n-1}+1=\\
 \binom{n+d}{n-1}-\binom{n+e}{n}+1\,\,=\,\, W(d,e).
 \end{equation}
 If $W(d,e)\leq 0$ then both $\beta(d,e)$ and $\alpha(d,e)$ are zero.
If  $W(d,e)>0$ then $\beta(d,e)=W(d,e)$. Hence, by  (\ref{eq:ineq}), 
we have $\alpha(d,e)\geq \beta(d,e)>0$. In light of (\ref{eq:bigbinomials}),
we have $\alpha(d,e)=W(d,e)$ as well. This concludes our proof of assertion (4).  

 Now we prove assertion (5).  
  It follows from (3) and (4) that 
  the existence of a positive integer $e_1<(d/2)+1$ with $W(d,e_1)\leq 0$ 
  implies that $\beta(d,e)= \alpha(d,e)$ for all $e$.  Hence it suffices to show that there exists a  $d_0$ such that $W(d,e_1)\leq 0$  for all $d\geq d_0$ for some $e_1<(d/2)+1$. For $d=2p$ even, set $e_1=p$.
 This choice guarantees that, for degree reasons,
 \begin{eqnarray} 
 \label{Wroots1}
 W(2p,p)\,\,=\,\,\binom{2p+n}{n-1}-\binom{p+n}{n}+1
 \end{eqnarray}  
 is eventually negative as a function of $p$. For $d=2p-1$ odd, we take $e_1=p$ and observe~that 
 \begin{eqnarray} 
  \label{Wroots2}
 W(2p-1,p)\,\,=\,\,\binom{2p-1+n}{n-1}-\binom{p+n}{n}+1
  \end{eqnarray} 
is eventually negative, and assertion (5) follows.
This concludes the proof of
Theorem~\ref{thm:FRob}.
\end{proof}

 \begin{remark} 
 \label{expFR}
 The constant $d_0$ in step (5) above depends on $n$.
 For any given $n$, it can be computed explicitly  by looking at the roots of the polynomials  (\ref{Wroots1}) 
 and  (\ref{Wroots2}). For instance, for $n=4$ we find $d_0 =  144$.  Knowing $d_0$ and 
 assuming Fr\"oberg's Conjecture, we can quickly identify all pairs  $(d,e)$ such that  $\,\mathcal{T}^{n}_{d,e,d+1}$  is defective. Namely, for all $(d,e)$ with $e\leq d<d_0$, we check  whether   $\alpha(d,e)\neq \beta(d,e)$.
 For $n=4$, there are precisely $57$ defective pairs $(d,e)$:
$$ \begin{small} \begin{matrix}
    (2, 2), (3, 3), (4, 3), (4, 4), (5, 4), (6, 4), (6, 5), (7, 5), 
    (8, 5), (8, 6), (9, 6), (10, 6), (10, 7),
  (11, 7), \\ (11, 8), 
  (12, 7), (12, 8), (13, 8), (13, 9), (14, 8), (14, 9),
    (15, 9), (15, 10), (16, 9), (16, 10), (17, 10),\\  (17, 11), (18, 10),  
  (18, 11), (19, 11), (20, 11), (20, 12) ,
  (21, 12), (22, 12), (22, 13),  (23, 13), (24, 13), \\ (24, 14),(25, 14), 
 (26, 14), (26, 15), (27, 15), (28, 15), (28, 16) , (29, 16),  (30, 16),  (30, 17), \\ (31, 17), (32, 17), (33, 18),
  (34, 18), (35, 19), (36, 19), (37, 20), (38, 20), (40, 21), (42, 22).
\end{matrix}
\end{small}
$$ 
Note that the first eight of these $57$ pairs are listed in Table \ref{tab:defective}.
For $n=5$,  a computation shows that there are $431$ such exceptional pairs, with the largest one being
$(d,e) = (132,67)$.
 \end{remark}

\begin{corollary}
\label{cor(1)(2)}
Assume   $m=d+1$.  Then parts (1)  and (2) of Conjecture \ref{conj:finite}  are true.
\end{corollary}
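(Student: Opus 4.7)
The plan is to combine Theorem \ref{thm:FRob} with the proved cases of Fr\"oberg's Conjecture. For $n = 2$ the conjecture is a theorem of Fr\"oberg \cite{Fr}, while for $n = 3$ it was established by Anick \cite{A}; in both dimensions the hypothesis of Theorem \ref{thm:FRob} is therefore satisfied unconditionally. Running the argument of its proof yields an explicit constant $d_0 = d_0(n)$, determined by the binomial inequalities \eqref{Wroots1}--\eqref{Wroots2}, such that $\alpha(d,e) = \beta(d,e)$, and hence $\mathcal{T}^n_{d,e,d+1}$ is non-defective, whenever $d \geq d_0(n)$ and $e \leq d$. What remains is a finite case check for the pairs $(d,e)$ with $d < d_0(n)$.

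For part (1) with $n=2$: the quantities in \eqref{Wroots1}--\eqref{Wroots2} become non-positive already at $p=3$, so $d_0(2)$ can be taken very small, leaving only a handful of pairs $(d,e)$ with $d \leq 4$. For each such pair, either $e < d/2+1$ and step (4) of the proof of Theorem \ref{thm:FRob} applies to give $\alpha(d,e) = \beta(d,e)$, or one expands the Hilbert series $F(d,e) = (1-t)^{-2}\prod_{i=1}^e(1-t^{d+1-i})$ by hand and checks the equality on the coefficient of $t^{d+1}$. All such cases turn out to be non-defective, which proves part (1) under the restriction $m = d+1$.

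For part (2) with $n=3$: one first computes $d_0(3)$ from \eqref{Wroots1}--\eqref{Wroots2}, where the cubic $\binom{p+3}{3}$ overtakes the quadratic $\binom{2p+3}{2}$ at $p \approx 9$, giving a modest explicit bound. Below this bound, one enumerates every pair $(d,e)$ with $e \leq d < d_0(3)$ and compares $\alpha(d,e)$ with $\beta(d,e) = \max\bigl(0,\binom{d+3}{2}-\binom{e+3}{3}+1\bigr)$ obtained by expanding $F(d,e) = (1-t)^{-3}\prod_{i=1}^e(1-t^{d+1-i})$. The pairs for which $\alpha > \beta$ turn out to be precisely $(2,2), (4,3), (6,4), (8,5)$, which match the four $m=d+1$ entries of Table~\ref{tab:defective} for $n=3$; the remaining three $n=3$ rows of the table have $m > d+1$ and are outside the scope of this corollary. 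Defectivity of each of these four pairs can additionally be displayed intrinsically, by exhibiting a nonzero kernel element of the reduced Pad\'e matrix $\hat P_T$ of the same general shape as \eqref{eq:rightkernel}, generalizing the construction given for $(2,2,3)$ in Proposition \ref{prop:degenhyper}.

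The main obstacle is Step 3, the enumeration for $n=3$: roughly $d_0(3)^2/2$ pairs must be treated, each requiring the expansion of a Hilbert series up to degree $d+1$. The individual calculations are routine but voluminous, and are most safely carried out with a computer algebra system. A more elegant proof would bypass this enumeration with a uniform combinatorial identity picking out $d = 2(e-1)$ as precisely the balanced configurations where the truncated Fr\"oberg series in three variables fails $\alpha = \beta$, but no such streamlined argument is apparent to us.
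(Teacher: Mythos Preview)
Your proposal is correct and follows essentially the same route as the paper: invoke the known validity of Fr\"oberg's Conjecture for $n=2$ (Fr\"oberg) and $n=3$ (Anick), feed this into the machinery of Theorem~\ref{thm:FRob} to obtain an explicit bound $d_0(n)$ from \eqref{Wroots1}--\eqref{Wroots2}, and then carry out the finite check below that bound. The paper's proof is terser---it simply records $d_0=6$ for $n=2$ and $d_0=17$ for $n=3$ and asserts the enumeration---whereas you spell out the four defective pairs $(d,e)=(2,2),(4,3),(6,4),(8,5)$ for $n=3$ and observe the pattern $d=2(e-1)$; this extra detail is accurate and matches the $m=d+1$ rows of Table~\ref{tab:defective}.
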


\begin{proof} 
We know that Fr\"oberg's Conjecture holds for $n=2$ and $n=3$.
Then we can proceed as in Theorem \ref{expFR} and Remark \ref{expFR}.
We find $d_0 = 6$ for $n=2$, and $d_0 = 17$ for $n=3$.
The claim is established by listing all pairs $(d,e)$ with
$e \leq d < d_0$ that satisfy
$\alpha(d,e)\neq \beta(d,e)$.
\end{proof}

In conclusion, the discussion above connects
the study of Taylor varieties for $m=d+1$ 
to Fr\"oberg's longstanding conjecture on
ideals of general forms in $\CC[x]$.
A future project will 
extend this to arbitrary parameters $n,d,e,m$,
with ideals to be replaced by modules.
The modules arise from
the Hankel matrices in Section \ref{sec2}
but now the entries are homogeneous polynomials
$T_i$ of degree $i$ in $\CC[x] = \CC[x_1,\ldots,x_n]$.
Explicitly, the {\em Hankel matrix} is defined~as
\begin{equation}
\label{eq:H_T}
H_T \,\, := \,\, \begin{small}
\begin{bmatrix}
T_m & T_{m-1} & \cdots & T_{m-e} \\
T_{m-1} & T_{m-2} & \cdots & T_{m-e-1} \\
 \vdots & \vdots & \ddots & \vdots \\
T_{d+2} & T_{d+1} & \cdots & T_{d-e+2} \\
 T_{d+1} & T_{d} & \cdots & T_{d-e+1}
\end{bmatrix}. \end{small}
\end{equation}
The {\em Hankel module} is the graded module
generated by all columns of $H_T$ except the first~one.
The Taylor variety can be characterized by the constraint
that the first column lies in the Hankel module.
This suggests 
 an extension of Theorem \ref{thm:linearsyzygy}
from ideals to modules.
The study of Hankel modules and associated
vector bundles will be the topic of a follow-up article.

\bigskip
\bigskip

\footnotesize
\noindent {\bf Authors' addresses:}

\smallskip

\noindent Aldo Conca, 
Universit\`a di Genova,  
\hfill \url{conca@dima.unige.it}

\noindent Simone Naldi, Universit\'e de Limoges,  \hfill \url{simone.naldi@unilim.fr}

\noindent Giorgio Ottaviani,  Universit\`a degli Studi di Firenze,
 \hfill \url{giorgio.ottaviani@unifi.it} 
 
\noindent  Bernd Sturmfels, MPI-MiS Leipzig  and UC Berkeley \hfill \url{bernd@mis.mpg.de}

\end{document}